\theoremstyle{plain}
\newtheorem{theorem}{Theorem}
\newtheorem{lemma}[theorem]{Lemma}
\newtheorem{corollary}[theorem]{Corollary}
\newtheorem{proposition}[theorem]{Proposition}
\newtheorem{conjecture}[theorem]{Conjecture}
\theoremstyle{definition}
\newtheorem{definition}[theorem]{Definition}
\numberwithin{equation}{section}
\begin{document}

\title{KK-rigidity of simple nuclear C$^*$-algebras}
\author{Christopher Schafhauser}
\address{Department of Mathematics, University of Nebraska--Lincoln, Lincoln, NE, USA}
\email{cschafhauser2@unl.edu}
\date{\today}
\thanks{This work was partially supported by NSF grants DMS-2000129 and DMS-2400178.}
\subjclass[2020]{46L35}

\begin{abstract}
	It is shown that if $A$ and $B$ are unital separable simple nuclear \mbox{$\mathcal Z$-stable} C$^*$-algebras and there is a unital embedding $A \rightarrow B$ which is invertible on $KK$-theory and traces, then $A \cong B$.  In particular, two unital separable simple nuclear $\mathcal Z$-stable C$^*$-algebras which either have real rank zero or unique trace  are isomorphic if and only if they are homotopy equivalent.
	It is further shown that two finite strongly self-absorbing C$^*$-algebras are isomorphic if and only if they are $KK$-equivalent in a unit-preserving way.
\end{abstract}

\maketitle

\setcounter{tocdepth}{1}
\tableofcontents

\section*{Introduction}
\renewcommand{\thetheorem}{\Alph{theorem}}

A conjecture of Elliott from the early 1990s (\cite{Elliott95}) states that unital separable simple nuclear C$^*$-algebras are classified up to isomorphism by $K$-theoretic invariants, in direct analogy with the Connes--Haagerup classification of separably acting injective factors (\cite{Connes76,Haagerup87}).  After several decades of work by the C$^*$-algebra community at large, a positive answer was reached: unital separable simple nuclear $\mathcal Z$-stable C$^*$-algebras satisfying the universal coefficient theorem (UCT) are classified by their Elliott invariant, consisting of a suitable combination of $K$-theory and traces (\cite{Kirchberg, Phillips00, GLN1, GLN2, EGLN, TWW, CETWW}---see also \cite{Gabe19, CGSTW}).

Without significantly enlarging the invariant, $\mathcal Z$-stability is necessary for such a classification result (\cite{Toms08}), but the role of the UCT is far more mysterious.  There is no known example of a separable nuclear C$^*$-algebra failing the UCT, and the question of whether one exists is perhaps the most important open problem in the theory of nuclear C$^*$-algebras, dating back to \cite{Rosenberg-Schochet87}.  The purpose of this paper is to examine the role of the UCT in the classification theorem.

In the infinite setting, the Kirchberg--Phillips theorem (\cite{Kirchberg,Phillips00}) states that two unital separable simple nuclear purely infinite C$^*$-algebras (i.e., unital Kirchberg algebras) $A$ and $B$ are isomorphic if and only if there is an invertible element in $KK(A, B)$ which preserves the unit on $K_0$.  In the same spirit, in the finite setting, one would hope for a classification result without the UCT using some combination of $KK$-theory and traces as the invariant.  The following result may be viewed as proof of concept.

\begin{theorem}\label{thm:main}
	If $A$ and $B$ are unital separable simple nuclear $\mathcal Z$-stable $C^*$-algebras and there is a unital embedding $A \rightarrow B$ which induces an invertible element in $KK$-theory and an invertible map on traces, then $A \cong B$.
\end{theorem}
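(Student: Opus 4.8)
The plan is to construct an ``approximate inverse'' to $\varphi$ taking values in an ultrapower of $A$, and then close up a two-sided Elliott intertwining. Fix a free ultrafilter $\omega$ on $\mathbb N$, write $\iota_A \colon A \to A^\omega$ and $\iota_B \colon B \to B^\omega$ for the canonical (constant-sequence) embeddings, and $\varphi^\omega \colon A^\omega \to B^\omega$ for the map induced by $\varphi$. The key object to produce is a unital $*$-homomorphism $\psi \colon B \to A^\omega$ whose class in $KK(B, A^\omega)$ is $(\iota_A)_*\bigl(KK(\varphi)^{-1}\bigr)$, where $KK(\varphi)^{-1} \in KK(B,A)$, and which induces on traces the affine map $T(A^\omega) \xrightarrow{T(\iota_A)} T(A) \xrightarrow{T(\varphi)^{-1}} T(B)$.

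Granting such a $\psi$, a short Kasparov-product computation shows that $\psi \circ \varphi$ and $\iota_A$ agree in $KK(A, A^\omega)$ and on traces, and likewise $\varphi^\omega \circ \psi$ and $\iota_B$ agree in $KK(B, B^\omega)$ and on traces; one then checks that this suffices to apply the stable uniqueness theorem for full nuclear $*$-homomorphisms into ultrapowers of simple $\mathcal Z$-stable C$^*$-algebras --- which requires no form of the UCT, the $KK$-class being part of the input rather than something to be constructed --- obtaining $\psi \circ \varphi \sim_{\mathrm{au}} \iota_A$ and $\varphi^\omega \circ \psi \sim_{\mathrm{au}} \iota_B$. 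Converting these approximate unitary equivalences to exact ones via countable saturation of the ultrapowers and iterating the zigzag $A \xrightarrow{\varphi} B \xrightarrow{\psi} A^\omega \xrightarrow{\varphi^\omega} \cdots$, a standard reindexing argument yields $A \cong B$.

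Everything thus reduces to the existence of $\psi$, and this is the heart of the matter and the step I expect to be hardest. The usual route to realising a prescribed $KK$-class and trace behaviour by a $*$-homomorphism into an ultrapower proceeds through the UCT of the domain $B$ --- exactly the hypothesis we must avoid. The point will be that, unlike in the general existence problem, we already possess a genuine unital $*$-homomorphism, namely $\varphi$, realising the \emph{inverse} $KK$-class. I would represent $KK(\varphi)^{-1}$ by a Cuntz pair of unital $*$-homomorphisms $\rho, \rho_0 \colon B \to \mathcal M(A \otimes \mathcal K)$ with $\rho - \rho_0$ valued in $A \otimes \mathcal K$, and then use the invertibility of $KK(\varphi)$ --- concretely witnessed by $\varphi$ --- together with the $\mathcal Z$-stability of $A$ and the stable uniqueness theorem to ``straighten'' this cycle into an honest unital $*$-homomorphism $B \to A^\omega$ carrying the required invariant. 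This is the stably finite analogue of the $\mathcal O_\infty$-absorption step in the Kirchberg--Phillips theorem, with the role of pure infiniteness played by $\mathcal Z$-stability together with the presence of $\varphi$; making this precise, and in particular keeping control of the trace map throughout the straightening, is where the real work lies.

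Finally, the strongly self-absorbing statement should follow by feeding Theorem~\ref{thm:main} a unital embedding implementing a given unital $KK$-equivalence: finite strongly self-absorbing C$^*$-algebras are simple, separable, nuclear, and $\mathcal Z$-stable, and have a unique trace (so any unital $KK$-equivalence between them automatically induces a trace isomorphism), while self-absorption can be used to produce the embedding required to invoke the theorem.
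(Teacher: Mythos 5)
Your overall architecture (produce an approximate one-sided inverse of $\varphi$ into a sequence algebra/ultrapower with prescribed $KK$- and trace-data, then intertwine) is the right shape, but both pillars you lean on are left as gaps, and they are exactly the points where the absence of the UCT bites. The existence of $\psi$ is the main one, as you yourself concede: ``straightening'' a Cuntz pair representing $KK(\varphi)^{-1}$ into a unital $*$-homomorphism $B \to A^\omega$ with controlled traces is not an outline of an argument --- it \emph{is} the stably finite existence problem. The paper's mechanism is quite specific: it works with the trace-kernel extension $0 \to J_A \to A_\infty \to A^\infty \to 0$ rather than the norm ultrapower, first producing $\theta \colon B \to A^\infty$ with the prescribed tracial behaviour from the CPoU-based classification of maps into uniform tracial sequence algebras (which uses no $K$-theory and no UCT), then lifting $\theta$ to $B \to A_\infty$ via the $KK$-lifting criterion applied to $\kappa = (\phi^*)^{-1}([\iota_A])$, and finally correcting a residual secondary obstruction in $KK(B, J_A)$ using the existence half of the classification of lifts, pulled back along the invertible $\phi^*$. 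Your sketch addresses neither the tracial quotient (the trace simplex of the norm ultrapower is not something you can prescribe maps into by hand) nor the secondary obstruction, and ``the invertibility of $KK(\varphi)$, concretely witnessed by $\varphi$'' is never actually deployed in a way that replaces these steps.

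The uniqueness step you invoke is also not available in the form you state: two unital embeddings $B \to B^\omega$ agreeing in $KK(B, B^\omega)$ and on traces are not known to be (approximately) unitarily equivalent without the UCT. Unitary equivalence of unital embeddings into a sequence algebra is governed by traces together with a secondary invariant $\langle \cdot, \cdot \rangle \in KK(B, J_B)$; equality of the $KK$-classes only places that invariant in the kernel of $(j_B)_*$, which need not vanish, and the classical stable uniqueness theorems give equivalence only after adding a large multiple of a full map --- removing that stabilization is precisely the hard part. The paper sidesteps this by \emph{choosing} $\psi_\infty$ so that the secondary invariant of the pair $(\psi_\infty \phi, \iota_A)$ is literally zero (possible because $\phi^*$ is invertible on $KK(\,\cdot\,, J_A)$), then descends to a genuine $\psi \colon B \to A$ by Gabe's reparameterization criterion, and --- since approximate unitary equivalence only controls $KL$, not $KK$ --- needs the separate fact that $KL$-invertibility implies $KK$-invertibility for separable nuclear algebras before the roles of $A$ and $B$ can be exchanged and Elliott intertwining applied. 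Finally, your last paragraph misreads the strongly self-absorbing statement: there one is handed only an abstract unit-preserving $KK$-equivalence, with no embedding to feed into the main theorem; manufacturing a unital embedding from that data is the content of a separate argument (via $A^\infty \cong R^\infty$ and the Dadarlat--Winter computation of $KK(A, \cdot)$ for $A$-stable targets), not a formal consequence of the main theorem plus uniqueness of the trace.
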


The condition that an embedding $A \rightarrow B$ is invertible in $KK$-theory is, loosely speaking, saying that it is invertible up to a weak notion of homotopy.  In particular, if there is a homotopy equivalence $A \rightarrow B$ which induces a bijection on traces, then $A \cong B$.  If $A$ and $B$ are C$^*$-algebras for which projections separate traces, then any homotopy equivalence $A \rightarrow B$ will induce a bijection on traces, leading to the following homotopy rigidity result.  Note that, in particular, this holds if $A$ and $B$ either have real rank zero or unique trace.

\begin{corollary}\label{cor:main}
	If $A$ and $B$ are unital separable simple nuclear $\mathcal Z$-stable $C^*$-algebras for which projections separate traces, then $A \cong B$ if and only if $A$ and $B$ are homotopy equivalent.
\end{corollary}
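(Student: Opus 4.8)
The forward implication is immediate, since a $*$-isomorphism is in particular a homotopy equivalence. For the converse the plan is to promote a homotopy equivalence to a single unital embedding meeting the hypotheses of Theorem~\ref{thm:main}. So fix $*$-homomorphisms $\varphi\colon A\to B$ and $\psi\colon B\to A$ with $\psi\varphi$ homotopic to $\mathrm{id}_A$ and $\varphi\psi$ homotopic to $\mathrm{id}_B$ through $*$-homomorphisms. I will argue that $\varphi$ is a unital embedding that is invertible in $KK$ and induces a bijection on traces; Theorem~\ref{thm:main} then yields $A\cong B$.

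Step one is to observe that $\varphi$ and $\psi$ are automatically unital, so that $\varphi$ --- being a unital $*$-homomorphism out of the simple algebra $A$ --- is injective. Put $p:=\varphi(1_A)$. Evaluating the homotopy $\psi\varphi\sim\mathrm{id}_A$ at $1_A$ produces a norm-continuous path of projections in $A$ from $\psi(p)$ to $1_A$; since homotopic projections in a unital C$^*$-algebra are unitarily equivalent and the only projection unitarily equivalent to the unit is the unit itself, $\psi(p)=1_A$. As $p\le 1_B$ and $\psi$ is positive, $1_A=\psi(p)\le\psi(1_B)\le 1_A$, so $\psi$ is unital; running the symmetric argument with the homotopy $\varphi\psi\sim\mathrm{id}_B$ gives $\varphi(\psi(1_B))=1_B$, and as $\psi(1_B)=1_A$ this says $\varphi(1_A)=1_B$.

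For the remaining two properties, recall that homotopic $*$-homomorphisms represent the same class in $KK$; hence $[\varphi]\in KK(A,B)$ and $[\psi]\in KK(B,A)$ are mutually inverse under the Kasparov product, the two composites being $[\psi\varphi]=[\mathrm{id}_A]$ and $[\varphi\psi]=[\mathrm{id}_B]$. In particular $[\varphi]$ is invertible and $K_0(\varphi)$ is an isomorphism with $K_0(\psi\varphi)=\mathrm{id}_{K_0(A)}$. It remains to show that the induced continuous affine map $T(\varphi)\colon T(B)\to T(A)$, $\tau\mapsto\tau\circ\varphi$, is a bijection, and this is where the hypothesis is used: since projections separate traces, a trace on $A$ or on $B$ is determined by the state it induces on $K_0$, and $\langle\tau\circ\varphi,x\rangle=\langle\tau,K_0(\varphi)x\rangle$. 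For injectivity, $\tau_1\circ\varphi=\tau_2\circ\varphi$ forces $\tau_1$ and $\tau_2$ to agree on $K_0(\varphi)(K_0(A))=K_0(B)$, hence $\tau_1=\tau_2$. For surjectivity, given $\sigma\in T(A)$ the tracial state $\tau:=\sigma\circ\psi$ (normalized because $\psi$ is unital) satisfies $\tau\circ\varphi=\sigma\circ(\psi\varphi)$, which agrees with $\sigma$ on $K_0(A)$ as $K_0(\psi\varphi)=\mathrm{id}$, hence equals $\sigma$. (Also $T(A)=\emptyset$ if and only if $T(B)=\emptyset$, by pulling a trace back along $\psi$ or $\varphi$, so the purely infinite case is covered trivially.) A continuous affine bijection between compact convex sets is an affine homeomorphism, so $\varphi$ is invertible on traces, and Theorem~\ref{thm:main} applies.

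The real content here is Theorem~\ref{thm:main}; the deduction above is comparatively soft. The one place calling for care is the bijectivity of $T(\varphi)$: a homotopy equivalence need not preserve trace spaces at all, because homotopic $*$-homomorphisms can induce different maps on traces. It is precisely the hypothesis that projections separate traces that rescues the argument, by reducing the comparison of traces to the level of $K_0$, on which homotopic maps do agree --- exactly the observation flagged before the statement.
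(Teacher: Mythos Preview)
Your proof is correct and follows essentially the same route as the paper's: show $\varphi$ is unital via the path-of-projections argument, note that $[\varphi]$ is $KK$-invertible by homotopy invariance, use the projections-separate-traces hypothesis to see that $T(\varphi)$ is a bijection, and then invoke Theorem~\ref{thm:main}. The only cosmetic difference is that the paper establishes invertibility of $T(\varphi)$ by checking $\tau\psi\varphi=\tau$ directly on projections (so that $T(\varphi)T(\psi)=\mathrm{id}$ and symmetrically $T(\psi)T(\varphi)=\mathrm{id}$), whereas you phrase the same computation via induced states on $K_0$.
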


For strongly self-absorbing C$^*$-algebras, a stronger result holds.  Recall that a unital separable infinite dimensional C$^*$-algebra $A$ is \emph{strongly self-absorbing} if $A \cong A \otimes A$ via an isomorphism approximately unitarily equivalent to the first factor embedding.  Such C$^*$-algebras are always simple, nuclear, and $\mathcal Z$-stable, and the finite ones have unique trace.  Infinite strongly self-absorbing C$^*$-algebras are Kirchberg algebras, and hence, as noted above, they are isomorphic if and only if they are $KK$-equivalent in a unit-preserving way.  It turns out that this also holds in the finite case.

\begin{theorem}\label{thm:main-ssa}
	If $A$ and $B$ are finite strongly self-absorbing $C^*$-algebras, then $A \cong B$ if and only if there is an invertible $\kappa \in KK(A, B)$ such that the induced map $\kappa_0 \colon K_0(A) \rightarrow K_0(B)$ sends $[1_A]$ to $[1_B]$.
\end{theorem}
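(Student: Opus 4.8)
The ``only if'' direction is immediate, so suppose we are given an invertible $\kappa \in KK(A,B)$ with $\kappa_0([1_A]) = [1_B]$ and wish to deduce $A \cong B$. Ordinarily the obstacle here is the \emph{existence} problem — realising $\kappa$ by an actual unital $*$-homomorphism $A \to B$ — which is exactly the place where the UCT is normally used. My plan is to avoid it by working instead with the canonical unital embeddings
\[
\iota_A \colon A \to A \otimes B,\quad a \mapsto a \otimes 1_B, \qquad \iota_B \colon B \to A \otimes B,\quad b \mapsto 1_A \otimes b,
\]
which require no existence theorem at all, and showing that the hypothesis on $\kappa$ forces each of them to be a $KK$-equivalence. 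Granting this, Theorem~\ref{thm:main} applied first to $\iota_A$ and then to $\iota_B$ gives $A \cong A \otimes B \cong B$.

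The first step is the $KK$-computation. I would write $[\iota_A] \in KK(A, A \otimes B)$ as the external Kasparov product of $1_A \in KK(A,A)$ with the class $[1_B] \in KK(\mathbb C, B) = K_0(B)$ of the unit. Since $A$ is strongly self-absorbing there is an isomorphism $\theta_A \colon A \to A \otimes A$ approximately unitarily equivalent to the first-factor embedding; hence $[\theta_A]$ is the external product of $1_A$ with $[1_A] \in K_0(A)$, and it is invertible. Setting $\lambda_A := (1_A \boxtimes \kappa^{-1}) \cdot [\theta_A]^{-1} \in KK(A \otimes B, A)$ — which is invertible, being a composite of invertible classes — a short computation with the interchange law for external products, using $\kappa_0^{-1}([1_B]) = [1_A]$, gives $[\iota_A] \cdot (1_A \boxtimes \kappa^{-1}) = [\theta_A]$ and therefore $[\iota_A] \cdot \lambda_A = 1_A$. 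Since $\lambda_A$ is invertible this forces $[\iota_A] = \lambda_A^{-1}$, so $\iota_A$ is a $KK$-equivalence; interchanging the roles of $A$ and $B$ (and of $\kappa$ and $\kappa^{-1}$) gives the same for $\iota_B$.

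The second step is to verify the hypotheses of Theorem~\ref{thm:main} for $\iota_A$. The C$^*$-algebra $A \otimes B$ is unital, separable, nuclear, and $\mathcal Z$-stable, and it is simple because the minimal tensor product of a simple nuclear C$^*$-algebra with a simple C$^*$-algebra is simple. It is strongly self-absorbing, since strong self-absorption is preserved under tensor products, and it is finite, since it carries the faithful tracial state $\tau_A \otimes \tau_B$; being a finite strongly self-absorbing C$^*$-algebra, it has a unique trace. Hence $\iota_A$ induces a (necessarily bijective) affine map between one-point tracial state spaces, so $\iota_A \colon A \to A \otimes B$ is a unital embedding that is invertible in $KK$-theory and invertible on traces. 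Theorem~\ref{thm:main} then gives $A \cong A \otimes B$, and symmetrically $B \cong A \otimes B$, whence $A \cong B$.

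I expect the only point needing genuine care to be the external-product bookkeeping in the first step: it is routine, but one must be careful about which copies of $A$ and $B$ the various factors live over. Everything else is either a direct application of Theorem~\ref{thm:main} or standard structure theory of strongly self-absorbing C$^*$-algebras; in particular the trace hypothesis of Theorem~\ref{thm:main} is never a real constraint, since $A$, $B$, and $A \otimes B$ all have a unique trace.
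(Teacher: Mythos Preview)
Your argument is correct and follows a genuinely different route. The paper does \emph{not} deduce Theorem~\ref{thm:main-ssa} from Theorem~\ref{thm:main}; instead it proves a one-sided embedding criterion (Theorem~\ref{thm:ssa-embedding}) using the Dadarlat--Winter computation $KK(A,E)\cong K_0(E)$ for separably $A$-stable $E$ (Theorem~\ref{thm:ssa-uct}) together with the identification $A^\infty\cong B^\infty\cong R^\infty$ (Proposition~\ref{prop:tracial-ultrapower}) and the lifting Theorem~\ref{thm:lifts1}, and then symmetrizes and concludes via Proposition~\ref{prop:ssa-embedding}. Your approach trades all of this Section~\ref{sec:ssa} machinery for a direct appeal to Theorem~\ref{thm:main} applied to the honest first-factor embeddings into $A\otimes B$; what you lose is the one-sided embedding statement Theorem~\ref{thm:ssa-embedding}, which the paper gets along the way. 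One point to tighten: approximate unitary equivalence does not in general preserve $KK$-classes, so the assertion ``$[\theta_A]=1_A\boxtimes[1_A]$'' does not follow from the bare definition of strongly self-absorbing. The standard fix is that since $A$ is $\mathcal Z$-stable (hence $K_1$-injective), the approximate unitary equivalence between $\theta_A$ and the first-factor embedding upgrades to an asymptotic one, which does give equality in $KK$; alternatively, note that $1_A\boxtimes[1_A]$ agrees with the class of the isomorphism $\theta_A$ in $KL$, hence is $KL$-invertible, hence $KK$-invertible by Theorem~\ref{thm:kl-inverse}---and invertibility of $1_A\boxtimes[1_A]$ is all you actually use.
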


Note that strong self-absorption is necessary in the previous theorem: the result does not extend to unital separable simple nuclear $\mathcal Z$-stable finite C$^*$-algebras, even if one assumes the algebras have a unique trace.  Indeed, there is extra information encoded in the real character $K_0(A) \rightarrow \mathbb R$ induced by the unique trace on such a C$^*$-algebra $A$.  This can be seen in the setting of unital simple AF algebras with unique trace.  For instance, take $K_0(A) = \mathbb Q + \mathbb Q \sqrt{2} \subseteq \mathbb R$ with the order inherited from $\mathbb R$ and unit $[1_A] = 1$, and take $K_0(B) = \mathbb Q \oplus \mathbb Q$ with the strict ordering from the first coordinate and unit $[1_B] = (1, 0)$.  It is easy to see that $A$ and $B$ are $KK$-equivalent in a unit-preserving way but are not isomorphic.

In general, any non-UCT classification result in the finite case must account for some interaction between $KK$-theory and traces, the same way that the Elliott invariant includes the natural pairing $T \times K_0 \rightarrow \mathbb R$.  The following is the most optimistic candidate for a compatibility condition.  Note that this is true under the UCT since the $K$-theory map $\kappa_* \colon K_*(A) \rightarrow K_*(B)$ induced by $\kappa$ and the trace map $\gamma$ form an isomorphism between the Elliott invariants of $A$ and $B$.\footnote{Conversely, if $A$ and $B$ satisfy the UCT and $(\kappa_*, \gamma)$ is an isomorphism of the Elliott invariants, then $\kappa_*$ lifts to an invertible $\kappa \in KK(A, B)$ by \cite[Corollary~7.5]{Rosenberg-Schochet87}, and the pair $(\kappa, \gamma)$ satisfies the hypotheses of Conjecture~\ref{conj}.}

\begin{conjecture}\label{conj}
	Suppose $A$ and $B$ are unital separable simple nuclear $\mathcal Z$-stable $C^*$-algebras.  If there is an invertible $\kappa \in KK(A, B)$ and an affine homeomorphism $\gamma \colon T(B) \rightarrow T(A)$ with $\kappa_0([1_A]) = [1_B]$ and $\langle \gamma(\tau), x \rangle = \langle \tau, \kappa_0(x) \rangle$ for all $\tau \in T(B)$ and $x \in K_0(A)$, then $A \cong B$.
\end{conjecture}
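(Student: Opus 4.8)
The plan is to reduce Conjecture~\ref{conj} to Theorem~\ref{thm:main} by promoting the abstract invariant data $(\kappa, \gamma)$ to a genuine \emph{homomorphism}. Concretely, I would aim to produce a unital $*$-homomorphism $\varphi \colon A \to B$ with $[\varphi] = \kappa$ in $KK(A,B)$ and $T(\varphi) = \gamma$. Granting such a $\varphi$, the conclusion is immediate: since $A$ is simple and $\varphi$ is unital, $\varphi$ is injective, hence a unital embedding; it induces the invertible class $\kappa$ on $KK$-theory and the affine homeomorphism $\gamma$ on traces, so Theorem~\ref{thm:main} yields $A \cong B$. The compatibility identity $\langle \gamma(\tau), x \rangle = \langle \tau, \kappa_0(x) \rangle$ is exactly the constraint any homomorphism forces on $(\kappa, \gamma)$, since $T(\varphi)(\tau) = \tau \circ \varphi$ pairs with $K_0$ through $\varphi_* = \kappa_0$; it is therefore precisely the obstruction-free condition under which one could hope to realize the pair, and the entire content of the conjecture lies in an \emph{existence theorem}: realizing compatible $(KK, \mathrm{trace})$ data by a unital $*$-homomorphism.

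For the existence theorem, I would situate the problem within the classification of $*$-homomorphisms and attempt an Elliott-type approximate intertwining between $A$ and $B$. The uniqueness half --- that two unital $*$-homomorphisms $A \to B$ agreeing in $KK$ and on traces are approximately unitarily equivalent --- is the natural companion to Theorem~\ref{thm:main} and is expected to hold in the $\mathcal Z$-stable setting without the UCT. The burden is thus entirely on existence: given the compatible pair $(\kappa, \gamma)$, build a unital $*$-homomorphism inducing it. One plausible route is to represent $\kappa$ by a Kasparov bimodule or Cuntz pair over $A$ and then exploit $\mathcal Z$-stability together with the pairing identity --- which guarantees that the prescribed trace data is consistent with $\kappa_0$ on $K_0$ --- to correct the resulting approximate morphism into an honest unital homomorphism realizing both $\kappa$ and $\gamma$.

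The main obstacle, and the reason the statement remains conjectural, is exactly this realization step. In the purely infinite case the analogous existence theorem holds \emph{without} the UCT: $\mathcal O_\infty$-absorption converts any Kasparov bimodule into a genuine homomorphism by an infinite-repeat construction, which is how the Kirchberg--Phillips existence half avoids the UCT. No such mechanism is known for finite algebras, where a class in $KK(A,B)$ is intrinsically only a quasi-homomorphism and cannot be straightened to a $*$-homomorphism by any known finite procedure. In all existence theorems of the Elliott program the UCT of the \emph{domain} $A$ enters precisely here, identifying $KK(A,-)$ with realizable $K$-theoretic data; dropping it leaves no handle on which invertible classes $\kappa$ arise from homomorphisms. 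Theorem~\ref{thm:main} shows that once such a homomorphism exists the finite theory is as rigid as the infinite one, so Conjecture~\ref{conj} isolates this non-UCT existence problem as the single remaining gap.
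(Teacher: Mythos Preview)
The statement you were asked to prove is labeled a \emph{Conjecture} in the paper, and the paper provides no proof; indeed, the surrounding text explicitly says ``This conjecture may be too naive'' and discusses what a correct compatibility condition between $KK$ and traces might look like. So there is no proof in the paper to compare against.

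Your proposal is not a proof either, and you are candid about this: you correctly isolate the problem as an existence theorem --- realizing a compatible pair $(\kappa,\gamma)$ by a unital $^*$-homomorphism $A\to B$ --- and then explain why no such existence result is currently available in the finite case without the UCT. This diagnosis matches the paper's own commentary: the author notes that under the UCT the conjecture follows because $(\kappa_*,\gamma)$ is an Elliott-invariant isomorphism, and that the compatibility hypothesis is exactly what a $^*$-homomorphism would force, which is ``in some sense, why no compatibility condition is needed in Theorem~\ref{thm:main}.'' Your reduction to Theorem~\ref{thm:main} via a hypothetical realizing homomorphism is the natural strategy, and your identification of the missing ingredient (a finite analogue of the $\mathcal O_\infty$-absorption trick that straightens $KK$-classes into homomorphisms) is accurate. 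One small caveat: the uniqueness statement you assert as ``expected'' --- that two unital $^*$-homomorphisms $A\to B$ agreeing in $KK$ and on traces are approximately unitarily equivalent --- is not established in the paper in that generality either (the paper works with the secondary invariant in $KK(A,J_B)$ for maps into $B_\infty$), so even that half would need justification. But since the existence half is already out of reach, this does not change the overall verdict: the conjecture remains open, and your write-up is a reasonable account of why.
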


This conjecture may be too naive.  It may be that some stronger, and perhaps more mysterious, compatibility condition is needed between $KK$ and traces.  Any modification of the pairing map for which the conjecture is true would be most welcome.  Of course, whatever the ``correct'' compatibility condition is should be automatic for a pair $(\kappa, \gamma)$ induced by a $^*$-homomorphism.  This is, in some sense, why no compatibility condition is needed in Theorem~\ref{thm:main}, where it is assumed that one starts with a $^*$-homomorphism inducing  $\kappa$ and $\gamma$.

The growing interest in group actions on ``classifiable'' C$^*$-algebras makes such a result more pressing.  Even under the somewhat restrictive condition where an equivariant UCT may be possible (i.e., for actions in a suitable bootstrap class), there are typically no satisfactory formulas for equivariant $KK$-theory.  To my knowledge, the only known equivariant UCTs are for certain finite group actions (\cite{Kohler10,Meyer-Nadareishvili}) and for actions of compact connected Lie groups with torsion free fundamental group (\cite{Rosenberg-Schochet86}), and even in these cases, the formulas obtained are rather complicated.  For this reason, obtaining classification results in the finite setting which do not depend on the UCT seems crucial for further development of the equivariant classification theory.\footnote{For equivariant classification results in the infinite setting, see \cite{Gabe-Szabo}.}

\subsection*{Acknowledgments}
A significant portion of this work was completed during my stay at the Fields Institute in the fall of 2023 during the thematic program on ``Operator Algebras and Applications,'' and I thank the institute and the organizers of the program for facilitating such a stimulating research environment.  In addition, this project substantially benefited from my participation in the AIM SQuaRE program ``von Neumann algebraic techniques in the classification of C$^*$-algebras'' during the period 2019--2023 with Jos\'e Carri\'on, Kristin Courtney, Jamie Gabe, Aaron Tikuisis, and Stuart White; I thank all of them for countless conversations over the last several years and thank AIM for their support.  I also thank Elo{\'i}sa Grifo for a helpful discussion about triangulated categories, which played a role in an earlier attempt at the results in this paper.

\renewcommand{\thetheorem}{\arabic{theorem}}
\numberwithin{theorem}{section}

\section{Preliminaries}\label{sec:preliminaries}

Throughout this article, $A$ and $B$ will denote unital separable simple nuclear $\mathcal Z$-stable C$^*$-algebras.  Although some of these conditions can be relaxed in some of the intermediate results, they are crucial for the results stated in the introduction.  For the sake of clarity, I chose to keep the same hypotheses on $A$ and $B$ throughout instead of aiming for maximal generality.

A unital separable simple nuclear $\mathcal Z$-stable C$^*$-algebra $A$ is either purely infinite or stably finite (\cite[Theorem~4.1.10(ii)]{Rordam-Stormer}), and in the latter case, the tracial state space of $A$, $T(A)$, is non-empty (\cite[Corollary~5.12]{Haagerup14}).  In the infinite case, all of the results in this paper are standard consequences of the Kirchberg--Phillips theorem, so the reader may prefer to assume the C$^*$-algebras are finite throughout the paper.

The paper will freely use basic results from $KK$-theory.  A thorough treatment can be found in \cite{Blackadar98} or \cite{Jensen-Thomsen91}.  For a much more condensed treatment, highlighting the aspects of $KK$-theory most used in the classification theory, see \cite[Section~5.1]{CGSTW}.  At various points in the paper, it will be necessary to work with $KK$-groups $KK(D, E)$ when $E$ is not $\sigma$-unital---however, the first variable $D$ will always be separable.  In this case, one can define
\begin{equation}\label{eq:kk-sep}
	KK(D, E) = \varinjlim \, KK(D, E_0),
\end{equation}
where the limit is taken over all separable subalgebras $E_0 \subseteq E$.\footnote{When $E$ is $\sigma$-unital, the limit definition in \eqref{eq:kk-sep} agrees with the standard definition; see \cite[Proposition~B.6]{CGSTW}, for example.}  See \cite[Appendix~B]{CGSTW} for further discussion.  

For a unital C$^*$-algebra $D$, let $T(D)$ denote the set of tracial states on $D$, viewed as a weak$^*$-compact convex subset of $D^*$. If $D$ and $E$ are unital C$^*$-algebras and $\phi \colon D \rightarrow E$ is a unital $^*$-homomorphism, write $T(\phi) \colon T(E) \rightarrow T(D)$ for the continuous affine map $\tau \mapsto \tau \phi$.  Also, write $[1_D]$ for the class of the unit of $D$ in the group $K_0(D)$.  For an element $\kappa \in KK(D, E)$, the induced map $K_0(D) \rightarrow K_0(E)$ will be denoted by $\kappa_0$.

When $A$ and $B$ are separable, the group $KK(A, B)$ carries a natural (often non-Hausdorff) topology (\cite{Dadarlat05}), and the group $KL(A, B)$ can be defined as the quotient of $KK(A, B)$ by the closure of $\{0\}$.  The advantage of working with $KL$ over $KK$ is that two $^*$-homomorphisms $\phi, \psi \colon A \rightarrow B$ which are approximately unitarily equivalent may define different classes in $KK(A, B)$, but they will coincide in $KL(A, B)$; in fact, this is the reason $KL$-groups were initially introduced by R\o{}rdam in \cite{Rordam95} (where they were only defined when $A$ satisfies the UCT).

The usual constructions of the topology on $KK(A, B)$ depend on the separability of $B$.  To define $KL(A, B)$ when $B$ is non-separable, following  \cite[Definition~5.5]{CGSTW}, define
\begin{align}
	Z_{KK(A, B)} &= \left\{ (\mathrm{ev_\infty^B})_*(\kappa) : 
	\! \begin{array}{ll} \kappa \in KK(A, C(\mathbb N^\dag, B)) \text{ with} \\[2pt] (\mathrm{ev}^B_n)_*(\kappa) = 0 \text { for all } n \in \mathbb N \end{array}\! \right\}
	\subseteq KK(A, B),
\end{align}
where $\mathbb N^\dag = \mathbb N \cup \{\infty\}$ is the one-point compactification of $\mathbb N$ (with $\mathbb N$ viewed as a discrete space) and, for $n \in \mathbb N^\dag$, $\mathrm{ev}_n^B \colon C(\mathbb N^\dag, B) \rightarrow B$ denotes the evaluation map $f \mapsto f(n)$.  Then define $KL(A, B) = KK(A, B) / Z_{KK(A, B)}$.  When $B$ is separable, $Z_{KK(A, B)}$ coincides with the closure of $\{0\}$ by \cite[Theorem~3.5]{Dadarlat05}, and so the above definition of $KL(A, B)$ coincides with the usual one in the case when $B$ is separable.  As with $KK$-theory, any map $\kappa \in KL(A, B)$ will induce a map $\kappa_0 \colon K_0(A) \rightarrow K_0(B)$; note that if $\tilde\kappa \in KK(A, B)$ is a lift of $\kappa$, then $\tilde\kappa_0 = \kappa_0$, so there is no ambiguity caused by overusing the notation in this way.

The main tool in the paper is the trace-kernel extension, which first arose somewhat implicitly in the work of Winter (\cite{Winter10,Winter12}) and Matui and Sato (\cite{MatuiSato}) as a tool for proving $\mathcal Z$-stability and has since been heavily used in both the classification and regularity theories.  Suppose $A$ is a unital separable simple nuclear $\mathcal Z$-stable finite C$^*$-algebra.  As noted above, this implies $T(A)$ is non-empty.  Let $\ell^\infty(A)$ denote the C$^*$-algebra of bounded sequences in $A$ and let
\begin{equation}
	A^\infty = \ell^\infty(A) / \big\{ a \in \ell^\infty(A) : \lim_{n \rightarrow \infty} \sup_{\tau \in T(A)} \tau(a_n^*a_n) = 0 \big\}
\end{equation}
be the uniform tracial sequence algebra of $A$.  Also, let
\begin{equation}
	A_\infty = \ell^\infty(A) / \big\{ a \in \ell^\infty(A) : \lim_{n \rightarrow \infty} \|a_n\| = 0 \big\}
\end{equation}
denote the norm sequence algebra of $A$.  Then the trace-kernel extension of $A$ is the extension
\begin{equation}
\begin{tikzcd}
	0 \arrow{r} & J_A \arrow{r}{j_A} & A_\infty \arrow{r}{q_A} & A^\infty \arrow{r} & 0,
\end{tikzcd}
\end{equation}
where $q_A$ is defined on representing sequences by $(a_n)_{n=1}^\infty \mapsto (a_n)_{n=1}^\infty$, $J_A = \ker(q_A)$ is the trace-kernel ideal of $A$, and $j_A$ is the inclusion map.  

For future use, recall the central surjectivity result of \cite[Theorem~3.3]{Kirchberg-Rordam14}, stating that $q_A$ restricts to a surjective map $A_\infty \cap A' \rightarrow A^\infty \cap A'$.  The more general result below follows from \cite[Propositions~4.5(iii) and~4.6]{Kirchberg-Rordam14}.\footnote{These results are stated in \cite{Kirchberg-Rordam14} with ultrapowers in place of sequence algebras, but after replacing all ultralimits with sequential limits, the same proofs apply here.}

\begin{proposition}\label{prop:central-surj}
	With the notation above, if $S \subseteq A_\infty$ is a separable subset, then $q_A$ restricts to a surjective map $A_\infty \cap S' \rightarrow A^\infty \cap q_A(S)'$.
\end{proposition}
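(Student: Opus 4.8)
The plan is to reduce the statement to the already-quoted central surjectivity result of Kirchberg--Rørdam, \cite[Theorem~3.3]{Kirchberg-Rordam14}, combined with \cite[Propositions~4.5(iii) and~4.6]{Kirchberg-Rordam14}. First I would dispose of the purely infinite case: when $A$ is purely infinite the objects $A^\infty$ and $J_A$ degenerate (or one works instead with the norm sequence algebra and the relative commutant results are classical), so the substantive content is in the finite case, which I assume from now on, so that $T(A) \neq \emptyset$ and the trace-kernel extension is as described.

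The key step is to set up the correct instances of the Kirchberg--Rørdam machinery. Given a separable subset $S \subseteq A_\infty$, enlarge it to a separable C$^*$-subalgebra $S_0 \subseteq A_\infty$ containing $S$ and the canonical copy of $A$; one should verify that $A_\infty \cap S' = A_\infty \cap S_0'$ up to the relevant purposes, or simply work with $S_0$ directly, since $q_A(S)' = q_A(S_0)'$ as well once $S_0$ is the generated subalgebra. The point of \cite[Propositions~4.5(iii) and~4.6]{Kirchberg-Rordam14} is that the relevant $\sigma$-ideals and relative commutants behave well under quotients: Proposition~4.6 gives that $q_A$ maps $A_\infty \cap S_0'$ onto a subalgebra of $A^\infty \cap q_A(S_0)'$, and the surjectivity onto all of $A^\infty \cap q_A(S_0)'$ follows by the same reindexing/$\epsilon$-test argument as in \cite[Theorem~3.3]{Kirchberg-Rordam14}, applied relative to $S_0$ rather than to $A$ alone. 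Concretely, given $b \in A^\infty \cap q_A(S_0)'$, lift $b$ to $(b_n) \in \ell^\infty(A)$, use that each element of $S_0$ has a representing sequence, and apply a diagonal argument over a countable dense subset of $S_0$ together with the defining property of the uniform tracial seminorm to correct $(b_n)$ to a sequence that exactly commutes with $S_0$ in $A_\infty$; this uses separability of $S_0$ in an essential way and is exactly where the hypothesis that $S$ be separable is needed.

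The footnote in the excerpt already flags the only genuine subtlety: \cite{Kirchberg-Rordam14} phrases everything with ultrapowers $A^\omega$, $A_\omega$ in place of the sequential algebras $A^\infty$, $A_\infty$. So the real work is to check that replacing the ultrafilter limit $\lim_{n \to \omega}$ by the honest limit $\lim_{n \to \infty}$ (which requires the relevant sequences to be genuinely convergent, not merely convergent along $\omega$) does not break the proofs of Theorem~3.3 and Propositions~4.5(iii), 4.6. I expect this to be the main obstacle, though a mild one: the $\sigma$-ideal formalism of \cite{Kirchberg-Rordam14} is set up precisely so that these arguments are insensitive to the choice of limit, and the reindexing arguments go through verbatim once one checks that the sequence algebras $A_\infty$, $A^\infty$ are $\sigma$-ideals in the appropriate ambient algebras in the sense of \cite[Section~4]{Kirchberg-Rordam14}. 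I would therefore structure the proof as: (1) reduce to $A$ finite; (2) reduce to $S$ a separable subalgebra containing $A$; (3) cite \cite[Propositions~4.5(iii) and~4.6]{Kirchberg-Rordam14} for the $\sigma$-ideal properties and the containment $q_A(A_\infty \cap S') \subseteq A^\infty \cap q_A(S)'$; (4) run the relative version of the reindexing argument of \cite[Theorem~3.3]{Kirchberg-Rordam14} to get surjectivity; and (5) remark, as in the paper's footnote, that the sequential versions of all cited results hold by the same proofs.
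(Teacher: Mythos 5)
Your overall route is the same as the paper's: the paper offers no independent argument for this proposition, but simply derives it from \cite[Propositions~4.5(iii) and~4.6]{Kirchberg-Rordam14} (the trace-kernel ideal $J_A$ is a $\sigma$-ideal in $A_\infty$, and quotients by $\sigma$-ideals are surjective on relative commutants of separable subalgebras), together with the footnoted remark that the ultrapower proofs go through verbatim for sequence algebras. Two points in your write-up deserve correction, though. First, enlarging $S$ to a separable subalgebra $S_0$ \emph{containing the canonical copy of $A$} is not harmless: while $S' = C^*(S)'$, adjoining $A$ shrinks both commutants, and an element $b \in A^\infty \cap q_A(S)'$ need not commute with $q_A(A)$; your correction argument, run against all of $S_0 \supseteq A$, could then only produce lifts of elements in the smaller set $A^\infty \cap q_A(C^*(S\cup A))'$, which is weaker than the proposition. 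The fix is simply to take $S_0 = C^*(S)$, for which $A_\infty \cap S' = A_\infty \cap S_0'$ and $A^\infty \cap q_A(S)' = A^\infty \cap q_A(S_0)'$ genuinely hold, and apply \cite[Proposition~4.6]{Kirchberg-Rordam14} with $B = A_\infty$, $J = J_A$, $C = S_0$. Second, with that proposition in hand no additional ``relative reindexing argument from Theorem~3.3'' is needed: the surjectivity onto $A^\infty \cap q_A(S_0)'$ is exactly the conclusion of Proposition~4.6 once Proposition~4.5(iii) supplies the $\sigma$-ideal property, so your step (4) is redundant (it essentially re-proves 4.6). Your remaining points---that the finite case is the substantive one, that separability of $S$ is what drives the $\epsilon$-test/diagonal correction, and that the only adaptation is replacing ultralimits by sequential limits---are accurate and consistent with the paper's footnote.
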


Given a function $r \colon \mathbb N \rightarrow \mathbb N$ with $r(n) \rightarrow \infty$ as $n \rightarrow \infty$, let $r^* \colon A_\infty \rightarrow A_\infty$ be the $^*$-homomorphism defined on representing sequences by $(a_n)_{n=1}^\infty \mapsto (a_{r(n)})_{n=1}^\infty$.  If $D$ is a separable C$^*$-algebra, $\psi_\infty \colon D \rightarrow A_\infty$ is a $^*$-homomorphism, and $r$ is a map as above,
call the composition $r^* \psi_\infty$ a reparameterization of $\psi_\infty$.  This will be used in the ``intertwining via reparameterization'' result from \cite[Theorem~4.3]{Gabe20}, which is modeled on the asymptotic version in \cite[Proposition~1.3.7]{Phillips00}, where it is attributed to Kirchberg.  Briefly, if $\psi_\infty$ is unitarily equivalent to all of its reparameterizations, then there is a $^*$-homomorphism $\psi \colon D \rightarrow A$ such that $\psi_\infty$ is unitarily equivalent to $\iota_A \psi$, where $\iota_A$ is the canonical inclusion of $A$ into $A_\infty$ as constant sequences.

\section{Classification of approximate embeddings}\label{sec:embeddings}

Let $A$ and $B$ be unital separable simple nuclear $\mathcal Z$-stable finite C$^*$-algebras.  The goal of this section is to describe all unital embeddings $A \rightarrow B_\infty$ in terms of traces and $KK$-theory.  The results of this section are all essentially from \cite{CETW} and \cite{CGSTW}.  

The classification is broken into three parts, collected in Theorems~\ref{thm:tracial},~\ref{thm:lifts1}, and~\ref{thm:lifts2}.  Although not quoted directly in this paper, it should be noted that the latter two results depend crucially on the Elliott--Kucerovsky characterization of (nuclearly) absorbing extensions and representations in terms of pure largeness (\cite{Elliott-Kucerovsky01}), along with a series of results in the regularity theory allowing one to access pure largeness from conditions such as $\mathcal Z$-stability (\cite{Kucerovsky-Ng06,Ortega-Perera-Rordam12}).

The first step is a classification of unital embeddings $A \rightarrow B^\infty$ due to Castillejos, Evington, Tikuisis, and White in \cite{CETW}, which is based on the notion of complemented partitions of unity (CPoU), developed in their collaboration with Winter in \cite{CETWW}.

\begin{theorem}[{\cite[Theorem~A]{CETW}}]\label{thm:tracial}
	Let $A$ and $B$ be unital separable simple nuclear $\mathcal Z$-stable finite $C^*$-algebras.
	\begin{enumerate}
		\item\label{thm:tracial:unique} If $\theta, \rho \colon A \rightarrow B^\infty$ are unital embeddings with $T(\theta) = T(\rho)$, then $\theta$ and $\rho$ are unitarily equivalent.
		\item\label{thm:tracial:exist} If $\gamma \colon T(B^\infty) \rightarrow T(A)$ is a continuous affine map, then there is a unital embedding $\theta \colon A \rightarrow B^\infty$ with $T(\theta) = \gamma$.
	\end{enumerate}
\end{theorem}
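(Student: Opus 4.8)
The plan is to prove both parts by the two-step scheme at the heart of the CPoU technology of \cite{CETWW}: first dispose of a \emph{local} (von Neumann algebraic) version of each statement, then glue the local solutions over $T(B^\infty)$ using complemented partitions of unity. The facts about the target $B^\infty$ that this requires are all standard consequences of the simplicity and $\mathcal Z$-stability of $B$: the canonical map $B^\infty_{\mathrm{sa}} \to \mathrm{Aff}\, T(B^\infty)$ is surjective; $B^\infty$ has strict comparison of positive elements by limit traces; $B^\infty$ contains a unital copy of every finite-dimensional C$^*$-algebra $F$ realising an arbitrary faithful trace on $F$ (in particular a unital projection of any constant trace in $[0,1]$, using that $\mathcal Z$ has positive contractions whose spectral measures concentrate on $\{0,1\}$ with any prescribed mass); and corners of $B^\infty$ cut down by projections retain enough structure to rerun the argument inside them. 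In the infinite case all statements are immediate from the Kirchberg--Phillips theorem, so I take $A$ and $B$ finite.

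\emph{The local case.} For \eqref{thm:tracial:exist} the building block is: given $\sigma \in T(A)$, there is a unital embedding $\theta_\sigma \colon A \to B^\infty$ with $T(\theta_\sigma)$ equal to the \emph{constant} map $\tau \mapsto \sigma$. Since $A$ is nuclear, $\pi_\sigma(A)''$ is a finite hyperfinite von Neumann algebra, hence an inductive limit in $\|\cdot\|_{2,\sigma}$ of finite-dimensional subalgebras; transporting each of these into $B^\infty$ unitally and trace-preservingly (using the structural facts above) and chaining the resulting maps along $\mathcal F \uparrow A$, $\varepsilon \downarrow 0$ produces $\theta_\sigma$. For \eqref{thm:tracial:unique} the local input is the classical fact that two unital trace-preserving embeddings of the separable C$^*$-algebra $A$ into a finite hyperfinite von Neumann algebra (or a sequential version of $\mathcal R^\omega$) inducing the same trace on $A$ are unitarily equivalent --- a routine consequence of Connes's theorem and a $2 \times 2$-matrix argument --- applied to the tracial localisations of $B^\infty$ along small neighbourhoods of extreme traces of $B$.

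\emph{Gluing.} Fix a finite $\mathcal F \subseteq A$ and $\varepsilon > 0$. By continuity and compactness, choose a finite relatively open cover $U_1, \dots, U_k$ of $T(B^\infty)$ on each piece of which the relevant data is nearly constant: for \eqref{thm:tracial:exist}, traces $\sigma_i \in T(A)$ with $|\gamma(\tau)(a) - \sigma_i(a)| < \varepsilon$ for $\tau \in U_i$, $a \in \mathcal F$; for \eqref{thm:tracial:unique}, unitaries $v_i$ implementing $\theta \sim \rho$ up to $\varepsilon$ on $\mathcal F$ as tested against traces in $U_i$, coming from the local case applied in a suitable corner. Pick positive contractions $a_1, \dots, a_k \in B^\infty$ forming a partition of unity subordinate to $\{U_i\}$ in $\mathrm{Aff}\, T(B^\infty)$, and apply CPoU --- in the relative-commutant form compatible with Proposition~\ref{prop:central-surj} --- to get orthogonal projections $p_1, \dots, p_k \in B^\infty$ with $\sum_i p_i = 1$, each commuting with all data constructed so far and tracially supported within $U_i$. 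For \eqref{thm:tracial:exist}, set $\theta := \sum_i p_i\, \theta_{\sigma_i}$: centrality of the $p_i$ makes $\theta$ a genuine unital $^*$-homomorphism, it is injective since $A$ is simple and $\theta(1) = 1$, and $T(\theta)(\tau) = \sum_i \tau(p_i)\,\sigma_i$ lies within $\varepsilon$ of $\gamma(\tau)$ on $\mathcal F$ because each $p_i$ is supported in $U_i$ and $\sum_i \tau(p_i) = 1$. For \eqref{thm:tracial:unique}, patch into $u := \sum_i p_i v_i$, perturb to a unitary, and use the support condition to convert the per-$U_i$ estimates into $\|u\theta(a)u^* - \rho(a)\| < \varepsilon$ on $\mathcal F$. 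Finally, running these approximate constructions for $\mathcal F \uparrow A$, $\varepsilon \downarrow 0$ and passing to a diagonal sequence over the defining sequences of $B^\infty$ (a reindexing argument of the type underlying \cite[Theorem~4.3]{Gabe20}) yields the exact conclusions.

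\emph{Main obstacle.} The delicate point is the gluing: arranging that the per-$U_i$ solutions, which come out of the von Neumann algebraic localisations and are therefore controlled only in $\|\cdot\|_2$ over $U_i$, assemble into something controlled in \emph{norm}, and exactly a $^*$-homomorphism (resp.\ unitary) rather than approximately. This is precisely the force of the word ``complemented'' in CPoU --- the $p_i$ must cut $B^\infty$ down to corners on which the local problem is solvable in norm --- and the bookkeeping needed to patch while preserving exact multiplicativity and unitarity is the real content. By contrast, the reduction to the local case and the local case itself are comparatively soft: hyperfiniteness of $\pi_\sigma(A)''$ from nuclearity of $A$ and uniqueness of embeddings into $\mathcal R^\omega$ on the one hand, and the transport of finite-dimensional models with arbitrary traces into $B^\infty$ via $\mathcal Z$-stability on the other.
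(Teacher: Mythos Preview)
This theorem is not proved in the paper; it is quoted from \cite[Theorem~A]{CETW} and used as a black box throughout Section~\ref{sec:embeddings}. Your sketch is a faithful high-level outline of the CPoU strategy actually carried out in \cite{CETW,CETWW}, so at the level of approach there is nothing to compare.

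Two remarks if you intend to turn the sketch into a proof. First, your description of the local uniqueness input as taking place in ``tracial localisations of $B^\infty$ along small neighbourhoods of extreme traces'' misstates the CPoU workflow: one solves the local problem at a \emph{single} trace $\tau$ (where the relevant $\|\cdot\|_{2,\tau}$-completion is hyperfinite by Connes), obtains a $\|\cdot\|_{2,\tau}$-estimate, upgrades it to an open neighbourhood by continuity, and only then glues. Second, in the existence gluing, $\theta = \sum_i p_i\,\theta_{\sigma_i}$ is a $^*$-homomorphism only if each $p_j$ commutes with the image of \emph{every} $\theta_{\sigma_i}$, not just with some ambient centre; this is exactly what CPoU in the relative commutant delivers, but it needs to be said explicitly. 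Finally, your diagonal-sequence step to pass from approximate to exact solutions must be carried out at the level of representing sequences in $\ell^\infty(B)$, not inside $B^\infty$ itself, since the latter is already a quotient --- this reindexing is routine but is where most of the bookkeeping in \cite{CETW} actually lives.
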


The second step of the classification is to decide when a given unital embedding $A \rightarrow B^\infty$ lifts along $q_B$ to a unital embedding $A \rightarrow B_\infty$.  The idea goes back to \cite{Schafhauser20a}, where the question is addressed when $B$ is the universal UHF algebra $\mathcal Q$ (and with ultrapowers in place of sequence algebras).  

\begin{theorem}[{\cite[Theorem~7.10(i)]{CGSTW}}]\label{thm:lifts1}
	Let $A$ and $B$ be unital separable simple nuclear $\mathcal Z$-stable finite $C^*$-algebras and let $\theta \colon A \rightarrow B^\infty$ be a unital embedding.  There is a unital embedding $\psi \colon A \rightarrow B_\infty$ such that $q_B \psi = \theta$ if and only if there is $\kappa \in KK(A, B_\infty)$ such that
	\begin{equation} 
		\kappa_0([1_A]) = [1_{B_\infty}] \in K_0(B_\infty) \quad \text{and} \quad (q_B)_*(\kappa) = [\theta] \in KK(A, B^\infty).
	\end{equation}
\end{theorem}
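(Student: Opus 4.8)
\emph{The forward implication} is immediate: if $\psi \colon A \to B_\infty$ is a unital lift of $\theta$ along $q_B$, set $\kappa = [\psi] \in KK(A, B_\infty)$; then $\kappa_0([1_A]) = [\psi(1_A)] = [1_{B_\infty}]$ because $\psi$ is unital, and $(q_B)_*(\kappa) = [q_B\psi] = [\theta]$.

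\emph{For the converse}, the plan is to recognise a unital lift of $\theta$ as a splitting of an extension of $A$ by the trace-kernel ideal $J_B$, and then to show that extension is trivial. First I would form the pullback $C^*$-algebra $E = \{(x,a) \in B_\infty \oplus A : q_B(x) = \theta(a)\}$, which is unital with unit $(1_{B_\infty}, 1_A)$ and fits into an extension $0 \to J_B \to E \xrightarrow{\pi} A \to 0$, where $J_B$ is identified with the ideal $J_B \oplus 0$ and $\pi$ is the second coordinate projection. A unital $^*$-homomorphic splitting $\sigma \colon A \to E$ of $\pi$ is exactly the same data as a unital $^*$-homomorphism $\psi \colon A \to B_\infty$ with $q_B\psi = \theta$ (compose $\sigma$ with the first coordinate projection), and any such $\psi$ is automatically injective since $A$ is simple; so it suffices to split $\pi$. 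Lifting $\theta$ to a c.p.c.\ map $\phi \colon A \to B_\infty$ by the Choi--Effros theorem and composing with the coordinate inclusion exhibits $E$ as semisplit, so it carries a well-defined class $[E] \in KK^1(A, J_B)$ --- here, as throughout, $KK$-groups with a non-$\sigma$-unital second variable are read via \eqref{eq:kk-sep}, and I would pass to separable subalgebras as needed.

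\emph{The key point} is that $[E] = 0$. Since $E$ is the pullback of the trace-kernel extension of $B$ along $\theta$, naturality of the connecting map gives $[E] = \partial([\theta])$, where $\partial \colon KK(A, B^\infty) \to KK^1(A, J_B)$ is the boundary map in the six-term exact sequence obtained by applying $KK(A, -)$ to $0 \to J_B \xrightarrow{j_B} B_\infty \xrightarrow{q_B} B^\infty \to 0$. By hypothesis $[\theta] = (q_B)_*(\kappa)$ lies in the image of $(q_B)_*$, so $[E] = \partial([\theta]) = 0$ by exactness. Vanishing of the class does not by itself produce a $^*$-homomorphic splitting, and this is where I expect the real difficulty to be: one needs $E$ to be (nuclearly) absorbing, so that its $KK^1$-class is a complete invariant and the vanishing class forces $E$ to be split. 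Here I would use that $J_B$ is stable (as $B$ is $\mathcal Z$-stable) and that simplicity and $\mathcal Z$-stability of $B$ make the relevant extensions purely large, via \cite{Kucerovsky-Ng06,Ortega-Perera-Rordam12}, and then invoke the Elliott--Kucerovsky theorem \cite{Elliott-Kucerovsky01}. Unwinding the resulting splitting gives a (possibly non-unital) $^*$-homomorphism $\psi_0 \colon A \to B_\infty$ with $q_B\psi_0 = \theta$; since $\kappa - [\psi_0]$ lies in the kernel of $(q_B)_*$, i.e.\ in $(j_B)_*(KK(A, J_B))$, I would then use the existence results of \cite{CGSTW} (equivalently, adjust $\psi_0$ by a trivial extension realising this class) to arrange $[\psi_0] = \kappa$.

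\emph{Finally, to correct the unit}: put $p = \psi_0(1_A)$. Then $q_B(p) = \theta(1_A) = 1_{B^\infty}$, so $1_{B_\infty} - p \in J_B$, while $[p] = \kappa_0([1_A]) = [1_{B_\infty}]$ in $K_0(B_\infty)$. As $B$ is simple, $\mathcal Z$-stable and finite it has cancellation of projections, and this passes to $B_\infty$ by lifting the relevant $K_0$-witness to a bounded sequence of partial isometries in $B$; hence $p$ is Murray--von Neumann equivalent to $1_{B_\infty}$ in $B_\infty$, and conjugating $\psi_0$ by an implementing partial isometry produces the desired unital embedding $\psi$. In summary, the conceptual content --- ``$\theta$ lifts to a $KK$-class, so the obstruction extension vanishes in $KK^1$'' --- is essentially a formality; the substance lies in passing from a vanishing $KK^1$-class to a genuine $^*$-homomorphism, which rests on the absorption machinery together with the bookkeeping, handled carefully in \cite{CGSTW}, needed to realise the prescribed class $\kappa$ and to work with $KK$-theory of sequence algebras.
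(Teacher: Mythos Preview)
The paper does not give its own proof of this statement: Theorem~\ref{thm:lifts1} is quoted directly from \cite[Theorem~7.10(i)]{CGSTW}, and the paper only remarks beforehand that the result ``depend[s] crucially on the Elliott--Kucerovsky characterization of (nuclearly) absorbing extensions and representations in terms of pure largeness (\cite{Elliott-Kucerovsky01}), along with a series of results in the regularity theory allowing one to access pure largeness from conditions such as $\mathcal Z$-stability (\cite{Kucerovsky-Ng06,Ortega-Perera-Rordam12}).'' Your sketch is exactly this outline---pull $\theta$ back to an extension of $A$ by $J_B$, observe its $KK^1$-class vanishes by exactness of the six-term sequence since $[\theta]$ lies in the image of $(q_B)_*$, and then use absorption (pure largeness plus Elliott--Kucerovsky) to promote the trivial $KK^1$-class to a genuine $^*$-homomorphic splitting---so it matches the approach the paper points to, and it is essentially correct as a sketch.

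One small point to tighten in your unit-correction step: conjugating $\psi_0$ by a partial isometry $v$ implementing the Murray--von Neumann equivalence $p \sim 1_{B_\infty}$ gives a unital $\psi$, but a priori only $q_B\psi = \mathrm{ad}(q_B(v))\,\theta$, not $q_B\psi = \theta$ on the nose. This is easily repaired---$q_B(v)$ is then a unitary in $B^\infty$, and since the unitary group of $B^\infty$ is path-connected (\cite[Proposition~2.1]{CETW}) it lifts to a unitary $u \in B_\infty$, after which $\mathrm{ad}(u^*)\psi$ is the desired unital lift---but it is worth saying explicitly. (Compare the analogous manoeuvre in the proof of Theorem~\ref{thm:lifts2}\ref{thm:lifts2:exist}, where the correcting element is arranged to lie in $M_2(J_B)+\mathbb C 1$.) Also note that the detour through ``arrange $[\psi_0]=\kappa$'' is not needed for the statement as written, which only asks for \emph{some} unital lift; you use it solely to force $[p]=[1_{B_\infty}]$, and that can alternatively be read off directly from the existence of $\kappa$ and exactness.
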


The final step is to describe all unital embeddings $A \rightarrow B_\infty$ with prescribed tracial data up to unitary equivalence, assuming at least one such embedding exists.  The result is essentially given in \cite{CGSTW}, but it will be convenient to set this up slightly differently.   For two unital embeddings $\phi, \psi \colon A \rightarrow B_\infty$, if $\phi$ and $\psi$ are unitarily equivalent, then $T(\phi) = T(\psi)$.  Conversely, when $T(\phi) = T(\psi)$, there is a secondary obstruction to unitary equivalence in $KL$-theory, as described below.  

Suppose $D$ and $E$ are C$^*$-algebras with $D$ separable, $I \subseteq E$ is an ideal, and  $\phi, \psi \colon D \rightarrow E$ are $^*$-homomorphisms whose pointwise difference $\phi - \psi$ has range contained in $I$---the pair $(\phi, \psi)$ is often called a $(D, I)$-Cuntz pair.  Then $\phi$ and $\psi$ have a formal difference $[\phi, \psi] \in KK(D, I)$, and hence also define an element $[\phi, \psi] \in KL(D, I)$.  This can be used to define what is called the Cuntz--Thomsen picture of $KK$-theory in \cite{CGSTW} (see \cite{Cuntz87,Thomsen90}), which essentially views $KK$-theory as the set of Cuntz pairs up to a suitable notion of homotopy.

\begin{definition}\label{def:secondary}
	Suppose $A$ and $B$ are unital separable simple nuclear $\mathcal Z$-stable finite C$^*$-algebras. If $\phi, \psi \colon A \rightarrow B_\infty$ are unital embeddings with $T(\phi) = T(\psi)$, define
	\begin{equation}
		\langle \phi, \psi \rangle = [\mathrm{ad}(u)\phi, \psi] \in KL(A, J_B),
	\end{equation}
	where $u \in B_\infty$ is a unitary with $q_B \mathrm{ad}(u) \phi = q_B \psi$.
\end{definition}

The following result implies that the secondary invariant $\langle \phi, \psi \rangle$ is Definition~\ref{def:secondary} is well-defined.

\begin{proposition}[{cf.\ \cite[Remark~7.5]{CGSTW}}]\label{prop:secondary}
	Suppose $A$ and $B$ are unital separable simple nuclear $\mathcal Z$-stable finite $C^*$-algebras and $\phi, \psi \colon A \rightarrow B_\infty$ are unital embeddings with $T(\phi) = T(\psi)$.
	\begin{enumerate}
		\item There is a unitary $u \in B_\infty$ such that $q_B \mathrm{ad}(u) \phi = q_B \psi$.
		\item The element $[\mathrm{ad}(u) \phi, \psi] \in KL(A, J_B)$ is independent of the choice of unitary $u$.\footnote{The same proof shows that $\langle \phi, \psi \rangle$ can be viewed as a well-defined element of $KK(A, J_B)$, but this will not be needed.}
	\end{enumerate} 
\end{proposition}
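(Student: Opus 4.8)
The plan is to reduce both parts to the classification of unital embeddings into $B^\infty$ (Theorem~\ref{thm:tracial}), to central surjectivity (Proposition~\ref{prop:central-surj}), and to formal properties of Cuntz pairs together with the $KK$ six-term exact sequence.

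For part~(i), note first that $q_B\phi,q_B\psi\colon A\to B^\infty$ are unital embeddings and that $T(q_B\phi)=T(\phi)\circ T(q_B)=T(\psi)\circ T(q_B)=T(q_B\psi)$; by Theorem~\ref{thm:tracial}\ref{thm:tracial:unique} there is a unitary $v\in B^\infty$ with $\mathrm{ad}(v)(q_B\phi)=q_B\psi$, so it only remains to lift $v$ to a unitary $u\in B_\infty$ --- equivalently, to see that $q_B$ maps $U(B_\infty)$ onto $U(B^\infty)$. Here finiteness enters: such $B$ has stable rank one, so if $(v_n)$ represents $v$ with $v_n^*v_n, v_nv_n^*\to 1_B$ in the uniform tracial seminorm, one replaces each $v_n$ by the unitary part of a nearby invertible to obtain a unitary lift $u$. (This surjectivity is also recorded in \cite{CGSTW}.)

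For part~(ii), let $u_1,u_2\in B_\infty$ be unitaries with $q_B\mathrm{ad}(u_i)\phi=q_B\psi$. Each $(\mathrm{ad}(u_i)\phi,\psi)$ is an $(A,J_B)$-Cuntz pair, and additivity of the Cuntz bracket gives $[\mathrm{ad}(u_1)\phi,\psi]-[\mathrm{ad}(u_2)\phi,\psi]=[\mathrm{ad}(u_1)\phi,\mathrm{ad}(u_2)\phi]$. Setting $\rho:=\mathrm{ad}(u_2)\phi$ and $w:=u_1u_2^*$ one has $\mathrm{ad}(u_1)\phi=\mathrm{ad}(w)\rho$, so the right-hand side equals $[\mathrm{ad}(w)\rho,\rho]$, where now $\rho\colon A\to B_\infty$ is a unital embedding, $q_B(w)$ commutes with $q_B\rho(A)$, and $(\mathrm{ad}(w)\rho,\rho)$ is an $(A,J_B)$-Cuntz pair. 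Thus everything reduces to proving $[\mathrm{ad}(w)\rho,\rho]=0$. I would deduce this from the Cuntz--Thomsen description of the boundary map: $[\mathrm{ad}(w)\rho,\rho]=\partial(\beta_w)$, where $\partial\colon KK^1(A,B^\infty)\to KK(A,J_B)$ is the boundary map of the trace-kernel extension and $\beta_w\in KK^1(A,B^\infty)$ is the class of the commuting pair $(q_B\rho,q_B(w))$ --- i.e.\ the $KK^1$-class of the $^*$-homomorphism $A\otimes C_0(\mathbb T\setminus\{1\})\to B^\infty$ carrying $a\otimes f$ to $f(q_B(w))\,q_B\rho(a)$. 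Since $\partial\circ(q_B)_*=0$, it is enough to see that $\beta_w$ lifts along $(q_B)_*\colon KK^1(A,B_\infty)\to KK^1(A,B^\infty)$: using Proposition~\ref{prop:central-surj} with $S=\rho(A)$ to lift the unitary $q_B(w)\in B^\infty\cap q_B\rho(A)'$ to a \emph{unitary} $\tilde w\in B_\infty\cap\rho(A)'$, the commuting pair $(\rho,\tilde w)$ defines a class in $KK^1(A,B_\infty)$ with $(q_B)_*$-image $\beta_w$. (Equivalently, once $\tilde w$ is at hand one replaces $w$ by $w\tilde w^*$, reducing to the case $q_B(w)=1$, where $\beta_w$ --- and hence $[\mathrm{ad}(w)\rho,\rho]$ --- is visibly zero.)

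The step I expect to require the most care is this last lifting --- upgrading the element-level lift given by central surjectivity to a \emph{unitary} lift of $q_B(w)$ inside the relative commutant $B_\infty\cap\rho(A)'$. This rests on $B_\infty\cap\rho(A)'$ being regular enough, concretely that it is a unital, stably finite, $\mathcal Z$-stable C$^*$-algebra, hence of stable rank one (cf.\ \cite{Kirchberg-Rordam14,CGSTW}), so that $q_B$ restricts there to a surjection of unitary groups. Granted this, the well-definedness of $\langle\phi,\psi\rangle$ in Definition~\ref{def:secondary} is a formal consequence of exactness of the $KK$ six-term sequence.
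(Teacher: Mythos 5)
Your overall scheme is the right one, and your parenthetical fallback in part (ii) is in fact the paper's argument: lift $q_B(w)$ to a unitary $\tilde w$ in the relative commutant $B_\infty\cap\rho(A)'$, replace $w$ by the unitary $w\tilde w^*\in J_B+\mathbb C 1_{B_\infty}$ (which implements the same conjugation of $\rho$), and conclude from invariance of the Cuntz-pair class under conjugation by unitaries in $J_B+\mathbb C 1_{B_\infty}$ --- this last fact is not ``visibly zero'' but is exactly \cite[Proposition~5.4(ii)]{CGSTW}, which is what the paper quotes. Your primary route through the boundary map, via the identity $[\mathrm{ad}(w)\rho,\rho]=\partial(\beta_w)$ for the commuting pair $(q_B\rho,q_B(w))$, invokes a Cuntz--Thomsen description of $\partial$ that you neither prove nor cite; since the fallback makes it unnecessary, I would drop it. Part (i) is fine: your stable-rank-one/polar-decomposition lifting of unitaries along $q_B$ works (and is recorded in \cite{CGSTW}), whereas the paper instead lifts exponentials using the path-connectedness of $U(B^\infty)$ from \cite[Proposition~2.1]{CETW}.

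The genuine weak point is the step you yourself flagged: upgrading the element-level lift from Proposition~\ref{prop:central-surj} to a \emph{unitary} lift of $q_B(w)$ inside $B_\infty\cap\rho(A)'$. You justify this by asserting that the (non-separable) relative commutant $B_\infty\cap\rho(A)'$ is $\mathcal Z$-stable, hence of stable rank one; neither \cite{Kirchberg-Rordam14} nor \cite{CGSTW} gives this off the shelf, and making it precise requires its own argument (e.g.\ a reindexing argument showing every separable subalgebra of $B_\infty\cap\rho(A)'$ sits inside a separable $\mathcal Z$-stable, stably finite subalgebra, then an $\varepsilon$-argument to pass from approximation by invertibles to surjectivity on unitary groups). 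The paper avoids all structure theory for $B_\infty\cap\rho(A)'$ by working on the quotient side: by \cite[Proposition~2.1]{CETW} (a CPoU fact), the unitary group of $B^\infty\cap q_B(\rho(A))'$ is path-connected, so $q_B(w)$ is a finite product of exponentials $e^{ih}$ with $h$ self-adjoint in $B^\infty\cap q_B(\rho(A))'$, and each $h$ lifts to a self-adjoint element of $B_\infty\cap\rho(A)'$ by Proposition~\ref{prop:central-surj}; the product of the lifted exponentials is the desired unitary $\tilde w$. With that substitution your proof closes, and matches the one in the paper.
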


\begin{proof}
	To see the existence of $u$, note that Theorem~\ref{thm:tracial}\ref{thm:tracial:unique} implies there is a unitary $\bar u \in B^\infty$ with $\mathrm{ad}(\bar u) q_B \phi = q_B \psi$.  By \cite[Proposition~2.1]{CETW}, the unitary group of $B^\infty$ is path-connected, and hence there is a unitary $u \in B_\infty$ such that $q_B(u) = \bar u$.  Then $q_B \mathrm{ad}(u) \phi = q_B \psi$.
	
	Suppose now that $u, v \in B_\infty$ are unitaries with $q_B \mathrm{ad}(u)\phi = q_B \psi = q_B \mathrm{ad}(v) \phi$.  Then $q_B(v^*u) \in B^\infty \cap q_B(\phi(A))'$.  By another application of \cite[Proposition~2.1]{CETW}, the unitary group of $B^\infty \cap q_B(\phi(A))'$ is path-connected. Also, by Proposition~\ref{prop:central-surj}, $q_B$ restricts to a surjective $^*$-homomorphism $B_\infty \cap \phi(A)' \rightarrow B^\infty \cap q_B(\phi(A))'$.  Hence there is a unitary $w \in B_\infty \cap \phi(A)'$ with $q_B(w) = q_B(v^* u)$.  Then $vwu^* \in J_B + \mathbb C 1_{B_\infty}$ is a unitary, and so
	\begin{equation}
		[\mathrm{ad}(u) \phi, \psi] = [\mathrm{ad}(vwu^*) \mathrm{ad}(u) \phi, \psi] = [\mathrm{ad}(v) \phi, \psi],
	\end{equation}
	where the first equality follows from \cite[Proposition~5.4(ii)]{CGSTW}, for example, and the second equality uses that $\mathrm{ad}(w) \phi = \phi$ by the choice of $w$.
\end{proof}

The final piece of the classification of unital embeddings $A \rightarrow B_\infty$ is given below.  Note that with the well-definedness of the secondary invariant $\langle \phi, \psi \rangle$ established, the result is essentially the same as \cite[Theorem~7.10]{CGSTW}.

\begin{theorem}[{cf.\ \cite[Thoerem~7.10]{CGSTW}}]\label{thm:lifts2}
	Suppose $A$ and $B$ are unital separable simple nuclear $\mathcal Z$-stable finite $C^*$-algebras.
	\begin{enumerate}
		\item\label{thm:lifts2:unique} Let $\phi, \psi \colon A \rightarrow B_\infty$ be unital embeddings.  Then $\phi$ and $\psi$ are unitarily equivalent if and only if $T(\phi) = T(\psi)$ and $\langle \phi, \psi \rangle = 0$.
		\item\label{thm:lifts2:exist} If $\psi \colon A \rightarrow B_\infty$ is a unital embedding and $\kappa \in KL(A, J_B)$ such that $\kappa_0([1_A]) = 0$, then there is a unital embedding $\phi \colon A \rightarrow B_\infty$ such that $T(\phi) = T(\psi)$ and $\langle \phi, \psi \rangle = \kappa$.
	\end{enumerate}
\end{theorem}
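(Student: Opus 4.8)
The plan is to deduce Theorem~\ref{thm:lifts2} from the results quoted above together with the classification theory of \cite{CGSTW}, essentially by translating the statement into the Cuntz--Thomsen picture of $KK$-theory and invoking the uniqueness and existence theorems there. For part~\ref{thm:lifts2:unique}, one direction is immediate: if $\phi$ and $\psi$ are unitarily equivalent, say $\psi = \mathrm{ad}(u)\phi$ for a unitary $u \in B_\infty$, then $T(\phi) = T(\psi)$ follows since conjugation by a unitary preserves traces, and $\langle \phi, \psi \rangle = [\mathrm{ad}(u)\phi, \psi] = [\psi, \psi] = 0$ by the standard properties of formal differences of Cuntz pairs (degenerate pairs vanish in $KK$; cf.\ \cite[Proposition~5.4]{CGSTW}). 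For the converse, suppose $T(\phi) = T(\psi)$ and $\langle \phi, \psi \rangle = 0$. Choose, via Proposition~\ref{prop:secondary}(i), a unitary $u \in B_\infty$ with $q_B\,\mathrm{ad}(u)\phi = q_B\psi$; replacing $\phi$ by $\mathrm{ad}(u)\phi$ (which does not change the unitary equivalence class and, by Proposition~\ref{prop:secondary}(ii), does not change $\langle \phi, \psi \rangle$), we may assume $q_B\phi = q_B\psi$, so that $(\phi, \psi)$ is a genuine $(A, J_B)$-Cuntz pair with $[\phi, \psi] = \langle \phi, \psi \rangle = 0$. Now the hypothesis that this formal difference vanishes means precisely that $\phi$ and $\psi$ are homotopic through Cuntz pairs into $J_B$ (up to adding a degenerate pair), and one feeds this into the uniqueness half of \cite[Theorem~7.10]{CGSTW} to conclude that $\phi$ and $\psi$ are unitarily equivalent in $B_\infty$. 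The role of simplicity, nuclearity, and $\mathcal Z$-stability here is exactly to guarantee (via \cite{Elliott-Kucerovsky01,Kucerovsky-Ng06,Ortega-Perera-Rordam12}) that the relevant extensions are absorbing, which is what makes the Cuntz-pair obstruction the only one.

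For part~\ref{thm:lifts2:exist}, given a unital embedding $\psi \colon A \rightarrow B_\infty$ and $\kappa \in KK(A, J_B)$ with $\kappa_0([1_A]) = 0$, I want to produce a unital embedding $\phi \colon A \rightarrow B_\infty$ with $q_B\phi = q_B\psi$ and $[\phi, \psi] = \kappa$. The idea is to realize $\kappa$ concretely as a formal difference of a Cuntz pair built on top of $\psi$: using the Cuntz--Thomsen picture, write $\kappa = [\alpha, \beta]$ for some $(A, J_B)$-Cuntz pair, arrange (by adding degenerate summands and absorbing a copy of $\psi$, using that $B_\infty$ absorbs matrix amplifications of such embeddings) that one coordinate of the pair agrees with $\psi$, and let $\phi$ be the other coordinate. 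The condition $\kappa_0([1_A]) = 0$ is what ensures that the perturbation $\phi$ of $\psi$ can be taken to be unital (the $K_0$-class of the unit is unchanged), and after adjusting by a path-connectedness argument as in Proposition~\ref{prop:secondary} one gets $q_B\phi = q_B\psi$, hence $T(\phi) = T(\psi)$ and $\langle \phi, \psi \rangle = [\phi, \psi] = \kappa$. The technical engine for this construction is again the existence half of \cite[Theorem~7.10]{CGSTW}, which produces embeddings into $B_\infty$ realizing prescribed $KK$-data relative to a fixed tracial behavior; the present statement is a mild repackaging using the secondary invariant of Definition~\ref{def:secondary}.

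The main obstacle, and the only real content beyond citation, is verifying that the secondary invariant $\langle \phi, \psi \rangle$ of Definition~\ref{def:secondary} is genuinely the same invariant that appears (in possibly different notation, e.g.\ as a difference of $KL$- or $KK$-classes with respect to a chosen lift) in \cite[Theorem~7.10]{CGSTW} — i.e., that the ``$\langle \phi, \psi \rangle = 0$'' and ``$\langle \phi, \psi \rangle = \kappa$'' conditions here coincide with the corresponding hypotheses and conclusions there. This is essentially a bookkeeping matter: one must check naturality of the formal-difference construction under the conjugation $\phi \mapsto \mathrm{ad}(u)\phi$ (already handled by Proposition~\ref{prop:secondary}), its compatibility with the identification \eqref{eq:kk-sep} for the non-$\sigma$-unital algebra $J_B$, and the behavior of $[\,\cdot\,,\,\cdot\,]$ under adding degenerate pairs. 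None of these is deep, but together they are what make the clean statement of Theorem~\ref{thm:lifts2} equivalent to the form in which the classification machinery of \cite{CGSTW} was originally phrased.
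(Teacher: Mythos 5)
Your part~\ref{thm:lifts2:unique} follows the paper's route almost verbatim: the forward direction is the well-definedness of $\langle\phi,\psi\rangle$ plus $[\psi,\psi]=0$, and the converse conjugates $\phi$ so that $q_B\phi=q_B\psi$ and feeds the vanishing Cuntz-pair class into the uniqueness part of \cite[Theorem~7.10]{CGSTW}. Two small points the paper makes explicit and you should too: that uniqueness theorem is stated for the class in $KL(A,J_B)$, so one passes from vanishing in $KK$ to vanishing in the quotient $KL$ (trivial, but it is the form in which the citation applies), and its hypothesis that $\tau\theta$ is faithful for every $\tau\in T(B^\infty)$ is supplied by simplicity of $A$. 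Neither is a gap.

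The gap is in part~\ref{thm:lifts2:exist}, at the sentence asserting that $\kappa_0([1_A])=0$ ``ensures that the perturbation $\phi$ of $\psi$ can be taken to be unital.'' That is precisely the nontrivial content, and no mechanism is given. The existence machinery of \cite{CGSTW} (Proposition~7.9 there, not the statement of Theorem~7.10 itself) produces a lift $\phi'$ of the tracial data with prescribed Cuntz-pair class, but $\phi'$ is a priori non-unital, and its hypotheses (fullness of the unitization) fail for a unital map, which is why the paper first deunitizes by passing to $\iota_2\psi\colon A\to M_2(B_\infty)$, $a\mapsto \psi(a)\oplus 0$. One then argues: (a) by \cite[Proposition~5.4(iv)]{CGSTW}, $(j_B\iota_2)_*(\kappa)=[\phi']-[\iota_2\psi]$, so $\kappa_0([1_A])=0$ gives $[\phi'(1_A)]=[\iota_2(1_{B_\infty})]$ in $K_0(M_2(B_\infty))$; (b) stable rank one of $B_\infty$ (via \cite{Rordam04}) upgrades this $K_0$ equality to unitary equivalence of the two projections; and (c) --- the step your sketch does not address --- the implementing unitary must be corrected, using Proposition~\ref{prop:central-surj} together with path-connectedness of the unitary group of $M_2(B^\infty)\cap\{\iota_2(1_{B^\infty})\}'$ from \cite[Proposition~2.1]{CETW}, so that it lies in $M_2(J_B)+\mathbb C 1$. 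Without (c), conjugating $\phi'$ by an arbitrary unitary moving $\phi'(1_A)$ to $\iota_2(1_{B_\infty})$ can destroy the Cuntz-pair relation with $\iota_2\psi$ or change the class in $KK(A,M_2(J_B))$, so you would gain unitality at the cost of losing $\langle\phi,\psi\rangle=\kappa$; you invoke ``a path-connectedness argument as in Proposition~\ref{prop:secondary}'' only to arrange $q_B\phi=q_B\psi$, which the existence theorem already gives, rather than where it is genuinely needed. Finally one removes the matrix amplification using that $(\iota_2)_*$ is an isomorphism by stability of $KK$. Supplying (a)--(c) is exactly the paper's proof of \ref{thm:lifts2:exist}; as written, your argument for unitality is an assertion, not a proof.
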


\begin{proof}
	The forward direction of \ref{thm:lifts2:unique} is immediate using the well-definedness of $\langle \phi, \psi \rangle$ (Proposition~\ref{prop:secondary}).  Indeed, if $u \in B_\infty$ is a unitary with $\mathrm{ad}(u) \phi = \psi$, then
	\begin{equation}
		\langle \phi, \psi \rangle = [\mathrm{ad}(u) \phi, \psi] = [\psi, \psi] = 0.
	\end{equation}
	For the converse, suppose $\langle \phi, \psi \rangle = 0$ and choose a unitary $u \in B_\infty$ such that $q_B \mathrm{ad}(u)\phi = q_B \psi$, so 
	$\langle \phi, \psi \rangle = [\mathrm{ad}(u)\phi, \psi]$.  Then the maps $\mathrm{ad}(u) \phi$ and $\psi$ both lift $\theta = q_B\psi \colon A \rightarrow B^\infty$, and $\tau \theta \in T(A)$ is faithful for all $\tau \in T(B^\infty)$ by the simplicity of $A$.  Since $[\mathrm{ad}(u)\phi, \psi] = 0$ in $KL(A, J_B)$, \cite[Thoerem~7.10(iii)]{CGSTW} implies 
	$\mathrm{ad}(u)\phi$ and $\psi$ are unitarily equivalent.  In particular, $\phi$ and $\psi$ are unitarily equivalent.
	
	The proof of \ref{thm:lifts2:exist} will use the same deunitization trick used in the proof of \cite[Theorem~7.10]{CGSTW} (and also in \cite{Schafhauser20a, Schafhauser20b}).  For any C$^*$-algebra $D$, let $\iota_2 \colon D \rightarrow M_2(D)$ denote the inclusion $\iota_2(d) = d \oplus 0_D$.  Given another C$^*$-algebra $E$ and a $^*$-homomorphism $\rho \colon D \rightarrow E$, the induced $^*$-homomorphism $M_2(D) \rightarrow M_2(E)$ is also denoted by $\rho$.
	
	Let $\tilde\kappa \in KK(A, J_B)$ be a lift of $\kappa$.  Define $\theta = q_B \psi \colon A \rightarrow B^\infty$ and note that $\theta$ is a unital embedding since $A$ is simple.  Then the unitization of $\iota_2 \theta$ is faithful and hence full as $A$ is simple.  Also, since $q_B \iota_2 \psi = \iota_2 \theta$, applying \cite[Proposition~7.9]{CGSTW} with $\iota_2 \psi$ and $\iota_2 \theta$ in place of $\psi$ and $\theta$ shows that there is a $^*$-homomorphism $\phi' \colon A \rightarrow M_2(B_\infty)$ such that $q_B \phi' = \iota_2 \theta$ and $[\phi', \iota_2 \psi] = (\iota_2)_*(\tilde\kappa)$.  Then by \cite[Proposition~5.4(iv)]{CGSTW},
	\begin{equation}
		(j_B\iota_2)_*(\tilde\kappa) = [\phi'] -[\iota_2\psi] \in KK(A, M_2(B_\infty)),
	\end{equation}
	and since $\kappa_0([1_A]) = 0$, it follows that 
	\begin{equation}
		[\phi'(1_A)] = [\iota_2(\psi(1_A))] = [\iota_2(1_{B_\infty})] \in K_0(M_2(B_\infty)).
	\end{equation}
	The assumptions on $B$ imply that $B$ has stable rank one (\cite[Thoerem~6.7]{Rordam04}), and hence so does $B_\infty$.\footnote{By \cite[Lemma~19.2.2]{Loring97}, for every element $b \in B_\infty$, there is a unitary $u \in B_\infty$ with $b = u|b|$.  Then $b$ is the limit of the invertible elements $u(|b|+\epsilon 1_{B_\infty})$ as $\epsilon \rightarrow 0^+$.} So there is a unitary $u \in M_2(B_\infty)$ with $\phi'(1_A) = u \iota_2(1_{B_\infty})u^*$ (cf.\ \cite[Proposition~6.5.1]{Blackadar98}).
	
	Note that $q_B(u) \in M_2(B^\infty)$ commutes with $\iota_2(1_{B^\infty})$ since 
	\begin{equation}
		q_B( \phi'(1_A)) = \iota_2( \theta(1_A))= \iota_2(1_{B^\infty}) = q_B(\iota_2(1_{B_\infty})).
	\end{equation}
	Using that $M_2(B^\infty) \cap \{\iota_2(1_{B^\infty})\}'$ has path-connected unitary group by \cite[Proposition~2.1]{CETW} and that $q_B$ restricts to a surjective map 
	\begin{equation}
		M_2(B_\infty) \cap \{\iota_2(1_{B_\infty})\}' \rightarrow M_2(B^\infty) \cap \{\iota_2(1_{B^\infty})\}'
	\end{equation} by Proposition~\ref{prop:central-surj}, there is a unitary $v \in B_\infty$ commuting with $\iota_2(1_{B_\infty})$ such that $q_B(v) = q_B(u)$.  Then $w = uv^*$ is a unitary in  $M_2(J_B) + \mathbb C 1_{M_2(B_\infty)}$  satisfying $\phi'(1_A) = w \iota_2(1_{B_\infty})w^*$.
	
	There is a unital embedding $\phi \colon A \rightarrow B_\infty$ such that $\mathrm{ad}(w) \iota_2 \phi = \phi'$.  By construction, $q_B \iota_2 \phi = q_B \iota_2 \psi$, and hence $q_B \phi = q_B \psi$.  As noted in \cite[Proposition~6.3(i)]{CGSTW}, $T(q_B)$ is invertible, and it follows that $T(\phi) = T(\psi)$.  Also,
	\begin{equation}
		[\iota_2 \phi, \iota_2 \psi] = [\phi', \iota_2 \psi] = (\iota_2)_*(\tilde\kappa) \in KK(A, M_2(J_B)),
	\end{equation}
	using \cite[Proposition~5.4(ii)]{CGSTW} for the first equality.
	Then since $(\iota_2)_*$ is an isomorphism by the stability of $KK$-theory, $[\phi, \psi] = \tilde\kappa \in KK(A, J_B)$,
	and in particular, $\langle \phi, \psi \rangle = \kappa \in KL(A, J_B)$.
\end{proof}

The existence result in the previous theorem is optimal, as the following proposition shows.

\begin{proposition}\label{prop:vanishing-unit}
	Suppose $A$ and $B$ are unital separable simple nuclear $\mathcal Z$-stable finite $C^*$-algebras and $\phi, \psi \colon A \rightarrow B$ are unital embeddings with $T(\phi) = T(\psi)$.  Then the map $\langle \phi, \psi \rangle_0 \colon K_0(A) \rightarrow K_0(J_B)$ induced by $\langle \phi, \psi \rangle$ vanishes on $[1_A]$.
\end{proposition}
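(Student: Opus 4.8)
The plan is to compute $\langle\phi,\psi\rangle_0([1_A])$ directly from a Cuntz-pair representative of the secondary invariant, exploiting that $[1_A] \in K_0(A)$ is the class of the unital inclusion $\nu \colon \mathbb C \to A$. The point is that restricting the relevant Cuntz pair along $\nu$ produces a \emph{degenerate} pair, and degenerate pairs vanish in $KK$.

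In detail: first I would identify $\phi$ and $\psi$ with the maps $A \to B_\infty$ obtained by composing with the constant-sequence inclusion $\iota_B$, and use Proposition~\ref{prop:secondary} to fix a unitary $u \in B_\infty$ with $q_B \mathrm{ad}(u)\phi = q_B\psi$ and $\langle\phi,\psi\rangle = [\mathrm{ad}(u)\phi,\psi] \in KK(A, J_B)$. Under the identification $K_0(A) = KK(\mathbb C, A)$ the class $[1_A]$ is represented by $\nu$, so $\langle\phi,\psi\rangle_0([1_A]) = [\nu]\otimes_A\langle\phi,\psi\rangle$. Since, in the Cuntz--Thomsen picture, the Kasparov product of the class of a $^*$-homomorphism with a formal difference is computed by precomposition, this product is represented by the Cuntz pair $(\mathrm{ad}(u)\phi\circ\nu,\ \psi\circ\nu)\colon \mathbb C \to B_\infty$. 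Now $\phi$ and $\psi$ are unital and $\mathrm{ad}(u)$ fixes $1_{B_\infty}$, so both entries of this pair equal the unital inclusion $\mathbb C \to B_\infty$; the pair is degenerate, and hence $\langle\phi,\psi\rangle_0([1_A]) = 0$ in $K_0(J_B)$, as required.

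An essentially equivalent route, using only results already quoted, is available: by \cite[Proposition~5.4(iv)]{CGSTW} one has $(j_B)_*\langle\phi,\psi\rangle = [\mathrm{ad}(u)\phi] - [\psi]$ in $KK(A, B_\infty)$, so that $(j_B)_*\big(\langle\phi,\psi\rangle_0([1_A])\big) = [1_{B_\infty}] - [1_{B_\infty}] = 0$; and $(j_B)_* \colon K_0(J_B) \to K_0(B_\infty)$ is injective, since by exactness of the six-term sequence of the trace-kernel extension its kernel is the image of the index map out of $K_1(B^\infty)$, while $K_1(B^\infty) = 0$ because each $M_n(B^\infty) \cong M_n(B)^\infty$ has path-connected unitary group by \cite[Proposition~2.1]{CETW}. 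I do not expect a genuine obstacle in either form: the first argument is pure bookkeeping in the Cuntz--Thomsen picture, and the second rests on the already-established connectivity of unitary groups of uniform tracial sequence algebras; the only point needing a little care is that $\langle\phi,\psi\rangle$ is built from $\iota_B\phi$ and $\iota_B\psi$ rather than from $\phi$ and $\psi$ themselves.
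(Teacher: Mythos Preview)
Your second route is exactly the paper's proof: reduce to $q_B\phi = q_B\psi$ (equivalently, work with $\mathrm{ad}(u)\phi$ in place of $\phi$), apply \cite[Proposition~5.4(iv)]{CGSTW} to get $(j_B)_*\langle\phi,\psi\rangle = [\phi]-[\psi]$ in $KK(A,B_\infty)$, observe this vanishes on $[1_A]$ since both maps are unital, and then use $K_1(B^\infty)=0$ (via \cite[Proposition~2.1]{CETW} applied to matrix amplifications) together with the six-term sequence to conclude that $K_0(j_B)$ is injective. Your remark about composing with $\iota_B$ is well taken; the paper's statement writes $\phi,\psi\colon A\to B$ but the proof (and the definition of $\langle\cdot,\cdot\rangle$) implicitly views them as landing in $B_\infty$.

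Your first route is a genuinely different and cleaner argument. Restricting the Cuntz pair $(\mathrm{ad}(u)\phi,\psi)$ along the unital inclusion $\nu\colon\mathbb C\to A$ yields a pair in which both entries are the unit map $\mathbb C\to B_\infty$, hence a degenerate pair representing $0$ in $KK(\mathbb C,J_B)=K_0(J_B)$. This bypasses both the six-term sequence and the vanishing of $K_1(B^\infty)$, relying only on the compatibility of the Cuntz--Thomsen picture with pullback by $^*$-homomorphisms. The paper's approach, while slightly heavier, has the virtue of staying entirely within the toolkit already assembled (Propositions~5.4 and~6.7 of \cite{CGSTW}); your direct computation is more self-contained but requires the reader to accept that the Kasparov product with $[\nu]$ is computed by precomposition at the level of Cuntz pairs.
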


\begin{proof}
	After replacing $\phi$ with a unitary conjugate of $\phi$, assume $q_B \phi = q_B \psi$, so that $\langle \phi, \psi \rangle = [\phi, \psi]$.  As noted in \cite[Proposition~5.4(iv)]{CGSTW}, $(j_B)_*([\phi, \psi]) = [\phi] - [\psi]$, and since $\phi$ and $\psi$ are unital, this implies
	\begin{equation}
		K_0(j_B)(\langle \phi, \psi \rangle_0([1_A])) = K_0(\phi)([1_A]) - K_0(\psi)([1_A]) = 0.
	\end{equation}
	Also, a CPoU argument implies $K_1(B^\infty) = 0$ (see \cite[Proposition~6.7]{CGSTW}, which is obtained by applying \cite[Proposition~2.1]{CETW} to matrix algebras over $B^\infty$), and hence the six-term exact sequence in $K$-theory induced by the trace-kernel extension implies $K_0(j_B)$ is injective.  The result follows.
\end{proof}

The following result collects some  arithmetic properties of the secondary invariants $\langle \phi, \psi \rangle \in KL(A, J_B)$ for later use.

\begin{proposition}\label{prop:secondary-arithmetic}
	Let $A_0$, $A$, and $B$ be unital separable simple nuclear $\mathcal Z$-stable finite $C^*$-algebras.  Suppose  $\alpha \colon A_0 \rightarrow A$ is a unital embedding and \mbox{$\phi, \psi, \rho \colon A \rightarrow B$} are unital embeddings with $T(\phi) = T(\psi) = T(\rho)$.  Then
	\begin{enumerate}
		\item\label{chain-rule} $\langle \phi, \psi \rangle + \langle \psi, \rho \rangle = \langle \phi, \rho \rangle$,
		\item\label{minus} $- \langle \phi, \psi \rangle = \langle \psi, \phi \rangle$, and
		\item\label{compose} $\alpha^*(\langle \phi, \psi \rangle) = \langle \phi \alpha, \psi \alpha \rangle$.
	\end{enumerate}
\end{proposition}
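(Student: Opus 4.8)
The plan is to deduce \ref{chain-rule}, \ref{minus}, and \ref{compose} from the well-definedness of $\langle\,\cdot\,,\cdot\,\rangle$ (Proposition~\ref{prop:secondary}) together with the standard formal properties of the Cuntz--Thomsen picture of $KK$-theory recalled in \cite[Section~5.1]{CGSTW}. The properties I would use are: a Cuntz pair with equal legs represents $0$; precomposing both legs of a Cuntz pair with a unital embedding $\alpha$ implements the map $\alpha^*$; conjugating both legs by a unitary of the ambient algebra $B_\infty$ leaves the class in $KK(A, J_B)$ unchanged, since this conjugation is inner on $J_B$ and inner automorphisms act trivially on $KK$ in the second variable; and the chain rule $[\sigma_1, \sigma_2] + [\sigma_2, \sigma_3] = [\sigma_1, \sigma_3]$ whenever $\sigma_1, \sigma_2, \sigma_3 \colon A \rightarrow B_\infty$ are $^*$-homomorphisms with pairwise differences landing in $J_B$. (If the chain rule is not directly quotable, it can be recovered from additivity of Cuntz pairs via a rotation homotopy in $M_2(J_B)$.)

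For \ref{compose}, I would first note that $T(\phi\alpha) = T(\psi\alpha)$, so that $\langle \phi\alpha, \psi\alpha \rangle$ is defined. Choose a unitary $u \in B_\infty$ with $q_B\,\mathrm{ad}(u)\phi = q_B\psi$, so that $\langle \phi, \psi \rangle = [\mathrm{ad}(u)\phi, \psi]$. The same $u$ satisfies $q_B\,\mathrm{ad}(u)(\phi\alpha) = (q_B\,\mathrm{ad}(u)\phi)\alpha = q_B(\psi\alpha)$, whence $\langle \phi\alpha, \psi\alpha \rangle = [\mathrm{ad}(u)(\phi\alpha), \psi\alpha]$ by Proposition~\ref{prop:secondary}. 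Since $\mathrm{ad}(u)(\phi\alpha) = (\mathrm{ad}(u)\phi)\circ\alpha$, applying $\alpha^*$ to the Cuntz pair $(\mathrm{ad}(u)\phi, \psi)$ yields precisely $[\mathrm{ad}(u)(\phi\alpha), \psi\alpha]$, giving $\alpha^*\langle\phi, \psi\rangle = \langle\phi\alpha, \psi\alpha\rangle$.

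For \ref{chain-rule}, choose unitaries $u, v \in B_\infty$ with $q_B\,\mathrm{ad}(u)\phi = q_B\psi$ and $q_B\,\mathrm{ad}(v)\psi = q_B\rho$. Then $q_B\,\mathrm{ad}(vu)\phi = \mathrm{ad}(q_B(v))\,q_B\,\mathrm{ad}(u)\phi = \mathrm{ad}(q_B(v))\,q_B\psi = q_B\rho$, so $\langle\phi,\psi\rangle = [\mathrm{ad}(u)\phi,\psi]$, $\langle\psi,\rho\rangle = [\mathrm{ad}(v)\psi,\rho]$, and $\langle\phi,\rho\rangle = [\mathrm{ad}(vu)\phi,\rho]$. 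Because $J_B$ is an ideal of $B_\infty$, the three maps $\mathrm{ad}(vu)\phi = \mathrm{ad}(v)\mathrm{ad}(u)\phi$, $\mathrm{ad}(v)\psi$, and $\rho$ have pairwise differences in $J_B$, so the chain rule gives $[\mathrm{ad}(v)\mathrm{ad}(u)\phi, \mathrm{ad}(v)\psi] + [\mathrm{ad}(v)\psi, \rho] = [\mathrm{ad}(v)\mathrm{ad}(u)\phi, \rho]$; the second summand is $\langle\psi,\rho\rangle$ and the right-hand side is $\langle\phi,\rho\rangle$. Finally $[\mathrm{ad}(v)\mathrm{ad}(u)\phi, \mathrm{ad}(v)\psi] = (\mathrm{ad}(v))_*[\mathrm{ad}(u)\phi, \psi] = [\mathrm{ad}(u)\phi, \psi] = \langle\phi,\psi\rangle$ by conjugation-invariance, which proves \ref{chain-rule}. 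Part \ref{minus} then follows by setting $\rho = \phi$ in \ref{chain-rule} and noting that $\langle\phi,\phi\rangle = [\phi,\phi] = 0$ (the witnessing unitary may be taken to be $1$). The only points requiring care, and which I would verify carefully, are that every difference map appearing lands in the ideal $J_B$ (this reduces to $J_B$ being an ideal together with the $q_B$-identities arranged above) and that the chain-rule and inner-conjugation-invariance statements are available in the Cuntz--Thomsen picture with the refined target $J_B$, not merely after pushing forward to $KK(A, B_\infty)$. I do not anticipate a genuine obstacle here; the content is entirely formal once Proposition~\ref{prop:secondary} is in hand.
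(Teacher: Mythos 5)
Your proposal is correct and follows essentially the same route as the paper: the same choice of unitaries $u,v$ with $q_B\,\mathrm{ad}(u)\phi = q_B\psi$ and $q_B\,\mathrm{ad}(v)\psi = q_B\rho$, the Cuntz-pair chain rule combined with invariance of the class in $KK(A, J_B)$ under conjugating both legs by a unitary of $B_\infty$ (a multiplier unitary of $J_B$) for \ref{chain-rule}, the specialization $\rho = \phi$ for \ref{minus}, and functoriality in the first variable for \ref{compose}. The only difference is that you spell out details the paper leaves implicit (e.g.\ that $\langle\phi,\rho\rangle = [\mathrm{ad}(vu)\phi,\rho]$ via well-definedness, and that the same unitary witnesses the precomposed pair), which is fine.
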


\begin{proof}
	Note that \ref{minus} is immediate from \ref{chain-rule} and the fact that $\langle \psi, \psi \rangle = 0$, which in turn follows from the fact that $[\psi, \psi] = 0$ and the well-definedness of $\langle \psi, \psi \rangle$.  Also, \ref{compose} follows directly from the definition of $\langle\cdot,\cdot\rangle$, together with the functoriality of $KL$-theory in the first variable.  It remains to show \ref{chain-rule}.  Fix unitaries $u, v \in B_\infty$ such that $q_B \mathrm{ad}(u) \phi = q_B \psi$ and $q_B \mathrm{ad}(v)\psi = q_B \rho$.  Since $KL$-theory is invariant under unitary equivalence,
	\begin{equation}
		[\mathrm{ad}(u) \phi, \psi] = \mathrm{ad}(v)_*([\mathrm{ad}(u) \phi, \psi]) = [\mathrm{ad}(vu) \phi, \mathrm{ad}(v)\psi].
	\end{equation}
	Also,
	\begin{equation}
		[\mathrm{ad}(vu) \phi, \mathrm{ad}(v) \psi] + [\mathrm{ad}(v)\psi, \rho] = [\mathrm{ad}(vu) \phi, \rho].
	\end{equation}
	Combining these two equations proves \ref{chain-rule}.
\end{proof}

\section{Proof of Theorem~\ref{thm:main}}\label{sec:main}

The basic strategy behind the proof of Theorem~\ref{thm:main} (and essentially all abstract isomorphism results in C$^*$-algebra theory) is Elliott's intertwining argument.  To show that a given unital embedding $\phi \colon A \rightarrow B$ is approximately unitarily equivalent to an isomorphism, it is enough to construct a unital embedding $\psi \colon B \rightarrow A$ which is a two-sided inverse for $\phi$ up to approximate unitary equivalence.  In fact, by exploiting the symmetry between $A$ and $B$, it will be enough to construct a unital embedding $\psi \colon B \rightarrow A$ which is a left inverse for $\phi$ up to approximate unitary equivalence.  Indeed, repeating the argument with $\psi$ in place of $\phi$ will give a left inverse for $\psi$, which will necessarily be approximately unitarily equivalent to $\phi$ by elementary algebra.

Some care is needed in this argument, however.  Once a left inverse $\psi$ is constructed, one must know that $[\psi]$ and $T(\psi)$ are invertible in order to repeat the argument to construct a left inverse of $\psi$.  Since $T$ is invariant under approximate unitary equivalence, it will follow that $T(\phi) T(\psi) = T(\psi\phi) = \mathrm{id}_{T(A)}$, and hence the invertibility of $T(\phi)$ implies that of $T(\psi)$.  However, this argument does not work on $KK$ since $KK$ is not invariant under approximate unitary equivalence.  The salient point is that the approximate unitary invariance of $KL$ does imply $[\psi]$ is invertible in $KL(B, A)$.  Using that the secondary invariant appearing in Theorem~\ref{thm:lifts2} takes values in $KL$ instead of $KK$, this will be sufficient to construct a left inverse for $\psi$, making use of the fact that a right inverse $\phi$ is already known to exist.

The following lemma will be applied twice in the proof of Theorem~\ref{thm:main-formal}: once as written (when constructing a left inverse $\psi$ for $\phi$) and once after reserving the roles of $A$ and $B$ (when constructing a left inverse $\phi'$ for $\psi$).

\begin{lemma}\label{lem:main}
	Suppose $A$ and $B$ are unital separable simple nuclear $\mathcal Z$-stable finite $C^*$-algebras and $\phi \colon A \rightarrow B$ is a unital embedding such that $[\phi] \in KL(A, B)$ and $T(\phi) \colon T(B) \rightarrow T(A)$ are invertible.  If there exists a unital embedding $\psi'_\infty \colon B \rightarrow A_\infty$ such that $T(\psi_\infty' \phi) = T(\iota_A)$, then there exists a unital embedding $\psi \colon B \rightarrow A$ such that $\psi \phi$ is approximately unitarily equivalent to $\mathrm{id}_A$.
\end{lemma}

\begin{proof}
	Since $[\phi]$ is invertible in $KL(A, B)$, the natural transformation 
	\begin{equation}
		\phi^* \colon KL(B, \,\cdot\,) \rightarrow KL(A, \,\cdot\,)
	\end{equation} 
	is also invertible.  By Proposition~\ref{prop:vanishing-unit}, $\langle \iota_A, \psi_\infty' \phi \rangle \in KL(A, J_A)$ vanishes on the unit in $K_0$.  Since $\phi$ is unital, this also holds for $(\phi^*)^{-1}(\langle \iota_A, \psi_\infty' \phi \rangle)$ in $KL(B, J_A)$.  Apply  Theorem~\ref{thm:lifts2}\ref{thm:lifts2:exist} to obtain a unital embedding $\psi_\infty \colon B \rightarrow A_\infty$ such that $T(\psi_\infty) = T(\psi_\infty')$ and
\begin{equation}\label{eq:3.5}
	\langle \psi_\infty, \psi_\infty' \rangle = (\phi^*)^{-1}(\langle \iota_A, \psi_\infty' \phi \rangle) \in KL(B, J_A).
\end{equation}
Applying $\phi^*$ to both sides of \eqref{eq:3.5} yields
\begin{equation}
	\langle \psi_\infty \phi, \psi_\infty'\phi\rangle =  \phi^*(\langle \psi_\infty, \psi_\infty' \rangle) = \langle \iota_A, \psi_\infty'\phi\rangle,
\end{equation}
using Proposition~\ref{prop:secondary-arithmetic}\ref{compose} for the first equality.  Then Proposition~\ref{prop:secondary-arithmetic}\ref{chain-rule} and~\ref{minus} imply $\langle \psi_\infty \phi, \iota_A \rangle = 0$, and hence $\psi_\infty\phi$ is unitarily equivalent to $\iota_A$ by Theorem~\ref{thm:lifts2}\ref{thm:lifts2:unique}.  

Let $r \colon \mathbb N \rightarrow \mathbb N$ be a function with $r(n) \rightarrow \infty$ as $n \rightarrow \infty$.  Then $r^* \psi_\infty \phi$ is unitarily equivalent to $r^* \iota_A = \iota_A$, which, in turn, is unitarily equivalent to $\psi_\infty \phi$.  As $T(\phi)$ is invertible, this implies $T(r^*\psi_\infty) = T(\psi_\infty)$.  Also,
\begin{equation}\label{eq:3.7}
	\phi^*(\langle r^* \psi_\infty, \psi_\infty \rangle) = \langle r^* \psi_\infty \phi, \psi_\infty \phi\rangle = 0,
\end{equation}
where the first equality follows from Proposition~\ref{prop:secondary-arithmetic}\ref{compose} and the second equality follows from Theorem~\ref{thm:lifts2}\ref{thm:lifts2:unique} using that $r^*\psi_\infty \phi$ and $\psi_\infty \phi$ are unitarily equivalent.  Since $\phi^*$ is invertible, \eqref{eq:3.7} implies $\langle r^*\psi_\infty, \psi_\infty \rangle = 0$, and then another application of Theorem~\ref{thm:lifts2}\ref{thm:lifts2:unique} yields that $r^*\psi_\infty$ and $\psi_\infty$ are unitarily equivalent.  As $r$ was arbitrary, \cite[Theorem~4.3]{Gabe20} implies that there is a unital embedding $\psi \colon B \rightarrow A$ such that $\iota_A \psi$ is unitarily equivalent to $\psi_\infty$.

Note that $\iota_A \psi \phi$ is unitarily equivalent to $\psi_\infty \phi$ and hence also to $\iota_A$.  It follows that $\psi \phi$ and $\mathrm{id}_A$ are approximately unitarily equivalent as morphisms $A \rightarrow A$.
\end{proof}

The following result is a more precise formulation of Theorem~\ref{thm:main}.  Although the algebras in the statement are not assumed to be finite, the new part of the theorem is the finite case.  The infinite case follows from a direct application of the Kirchberg--Phillips theorem.

\begin{theorem}\label{thm:main-formal}
	If $A$ and $B$ are unital separable simple nuclear $\mathcal Z$-stable $C^*$-algebras and $\phi \colon A \rightarrow B$ is a unital embedding with $[\phi] \in KK(A, B)$ and $T(\phi) \colon T(B) \rightarrow T(A)$  invertible, then there is an isomorphism $A \rightarrow B$ approximately unitarily equivalent to $\phi$.
\end{theorem}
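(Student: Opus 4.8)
The infinite case is handled directly: a unital separable simple nuclear $\mathcal Z$-stable C$^*$-algebra is purely infinite precisely when it has no tracial state, so since $T(\phi)$ is a bijection, $A$ is purely infinite if and only if $B$ is; in that case $A$ and $B$ are unital Kirchberg algebras, $[\phi]$ is an invertible element of $KK(A,B)$ with $[\phi]_0([1_A]) = [1_B]$, and the Kirchberg--Phillips theorem furnishes an isomorphism $A \to B$ inducing $[\phi]$, which by Kirchberg--Phillips uniqueness is approximately unitarily equivalent to $\phi$. So assume henceforth that $A$ and $B$ are finite.

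Following Elliott's intertwining argument and exploiting the symmetry of the hypotheses in $A$ and $B$, it suffices to construct a unital embedding $\psi \colon B \to A$ with $\psi\phi$ approximately unitarily equivalent to $\mathrm{id}_A$. Such a $\psi$ again satisfies the hypotheses of the theorem: $T(\psi) = T(\phi)^{-1}$ because $T$ is invariant under approximate unitary equivalence, and $[\psi]$ is the $KL$-inverse of the $KL$-invertible class $[\phi]$ (since $[\psi\phi] = [\mathrm{id}_A]$ in $KL(A,A)$), hence invertible in $KK(B,A)$ by Theorem~\ref{thm:kl-inverse}; thus the construction applied to $\psi$ gives a unital embedding $\rho \colon A \to B$ with $\rho\psi$ approximately unitarily equivalent to $\mathrm{id}_B$, whence $\rho$ is approximately unitarily equivalent to $\rho\psi\phi$ and thus to $\phi$, so $\phi\psi$ is approximately unitarily equivalent to $\mathrm{id}_B$, and Elliott's approximate intertwining produces an isomorphism $A \to B$ approximately unitarily equivalent to $\phi$. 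To build $\psi$ itself, I would construct a unital embedding $\psi_\infty \colon B \to A_\infty$ with $\psi_\infty\phi$ unitarily equivalent to $\iota_A$ and then invoke the intertwining-via-reparameterization result \cite[Theorem~4.3]{Gabe20}. The hypothesis of that result --- that $\psi_\infty$ be unitarily equivalent to each reparameterization $r^*\psi_\infty$ --- is automatic once $\psi_\infty\phi$ is unitarily equivalent to $\iota_A$: since $r^*$ fixes constant sequences, $r^*\psi_\infty\phi = r^*(\psi_\infty\phi)$ is unitarily equivalent to $r^*\iota_A = \iota_A$, hence to $\psi_\infty\phi$, and because $T(\phi)$ is injective and $\phi^* \colon KK(B,J_A) \to KK(A,J_A)$ is an isomorphism (as $[\phi]$ is invertible), the uniqueness statement Theorem~\ref{thm:lifts2}\ref{thm:lifts2:unique} lets one cancel $\phi$ and conclude $r^*\psi_\infty$ is unitarily equivalent to $\psi_\infty$. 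So the whole problem reduces to producing $\psi_\infty \colon B \to A_\infty$ with $T(\psi_\infty\phi) = T(\iota_A)$ and $\langle\psi_\infty\phi,\iota_A\rangle = 0$.

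Write $\lambda = [\phi]^{-1} \in KK(B,A)$ and $\delta = T(\phi)^{-1}$. Since $T(q_A) \colon T(A^\infty) \to T(A_\infty)$ is a bijection (\cite[Proposition~6.3(i)]{CGSTW}), Theorem~\ref{thm:tracial}\ref{thm:tracial:exist} provides a unital embedding $\theta \colon B \to A^\infty$ with $T(\theta) = \delta \circ T(q_A\iota_A)$, where $q_A\iota_A \colon A \to A^\infty$ is the inclusion of $A$ as constant sequences. A routine trace computation gives $T(\theta\phi) = T(q_A\iota_A)$, so $\theta\phi$ and $q_A\iota_A$ are unitarily equivalent by Theorem~\ref{thm:tracial}\ref{thm:tracial:unique}; hence $[\phi]\cdot[\theta] = [\theta\phi] = [q_A\iota_A]$ in $KK(A,A^\infty)$, and therefore $[\theta] = \lambda\cdot[q_A\iota_A] = (q_A)_*\bigl((\iota_A)_*\lambda\bigr)$. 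Put $\kappa := (\iota_A)_*\lambda \in KK(B,A_\infty)$; since $\phi$ is unital one has $\kappa_0([1_B]) = (\iota_A)_0(\lambda_0([1_B])) = [1_{A_\infty}]$, and $(q_A)_*\kappa = [\theta]$, so Theorem~\ref{thm:lifts1} supplies a unital embedding $\psi_\infty \colon B \to A_\infty$ with $q_A\psi_\infty = \theta$. A further routine trace computation gives $T(\psi_\infty\phi) = T(\iota_A)$, so the secondary invariant $\epsilon := \langle\psi_\infty\phi,\iota_A\rangle \in KK(A,J_A)$ is defined, and $\epsilon_0([1_A]) = 0$ by the evident analogue for maps into $A_\infty$ of Proposition~\ref{prop:vanishing-unit}.

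It remains to correct $\psi_\infty$ to kill $\epsilon$. Set $\nu := -\lambda\cdot\epsilon \in KK(B,J_A)$; then $\phi^*(\nu) = -\epsilon$ (using $[\phi]\cdot\lambda = 1_A$) and $\nu_0([1_B]) = -\epsilon_0([1_A]) = 0$ (using that $\phi$ is unital together with $\epsilon_0([1_A]) = 0$). Theorem~\ref{thm:lifts2}\ref{thm:lifts2:exist} then produces a unital embedding $\psi_\infty' \colon B \to A_\infty$ with $T(\psi_\infty') = T(\psi_\infty)$ and $\langle\psi_\infty',\psi_\infty\rangle = \nu$. Using the chain rule and the composition formula for the secondary invariant (the analogues for maps into $A_\infty$ of Proposition~\ref{prop:secondary-arithmetic}\ref{chain-rule} and~\ref{compose}, which hold by the same proofs), one computes $\langle\psi_\infty'\phi,\iota_A\rangle = \langle\psi_\infty'\phi,\psi_\infty\phi\rangle + \langle\psi_\infty\phi,\iota_A\rangle = \phi^*(\nu) + \epsilon = 0$, while $T(\psi_\infty'\phi) = T(\psi_\infty\phi) = T(\iota_A)$. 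Hence $\psi_\infty'\phi$ is unitarily equivalent to $\iota_A$ by Theorem~\ref{thm:lifts2}\ref{thm:lifts2:unique}, and by the reduction in the second paragraph this yields the required $\psi \colon B \to A$, completing the proof. I expect the main obstacle to be precisely this secondary correction: one must annihilate $\langle\psi_\infty\phi,\iota_A\rangle$ without leaving the class of unital embeddings, and this works exactly because $[\phi]$ is $KK$-invertible --- making $\phi^*$ an isomorphism on $KK(B,J_A)$ so the correction $\nu$ exists --- and $\phi$ is unital --- forcing, via Proposition~\ref{prop:vanishing-unit}, that $\nu_0([1_B]) = 0$ so that Theorem~\ref{thm:lifts2}\ref{thm:lifts2:exist} applies. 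A further step requiring care is the identity $[\theta] = (q_A)_*\bigl((\iota_A)_*\lambda\bigr)$, which ties the prescribed tracial data of $\theta$ to its $KK$-class and is obtained somewhat indirectly from the tracial uniqueness Theorem~\ref{thm:tracial}\ref{thm:tracial:unique}.
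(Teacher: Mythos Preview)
Your proof is correct and follows essentially the same route as the paper's: reduce to the finite case via Kirchberg--Phillips, build $\theta\colon B\to A^\infty$ from the inverted tracial data, lift it to $\psi_\infty\colon B\to A_\infty$ using the $KK$-class $(\iota_A)_*[\phi]^{-1}$, correct the secondary invariant via Theorem~\ref{thm:lifts2}\ref{thm:lifts2:exist} so that $\psi_\infty\phi$ becomes unitarily equivalent to $\iota_A$, then descend to $\psi\colon B\to A$ by reparameterization and finish with the $KL$-invertibility trick and Elliott intertwining. The only differences from the paper are cosmetic: you write $\lambda=[\phi]^{-1}$ where the paper writes $(\phi^*)^{-1}$, you interchange the names $\psi_\infty$ and $\psi_\infty'$, and you front-load the intertwining reduction rather than deferring it to the end.
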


\begin{proof}
	Note that since $T(A) \cong T(B)$, either $A$ and $B$ are both finite or both infinite.  In the infinite case, $A$ and $B$ are purely infinite, and the result follows from the Kirchberg--Phillips theorem.  For the rest of the proof, assume $A$ and $B$ are finite. 
	
	The first step is to construct a unital embedding $\psi_\infty' \colon B \rightarrow A_\infty$ as in Lemma~\ref{lem:main}.  Define 
	\begin{equation}
		\gamma = T(\phi)^{-1} T(q_A \iota_A) \colon T(A^\infty) \rightarrow T(B)
	\end{equation}
	and note that $\gamma$ is continuous and affine.  By Theorem~\ref{thm:tracial}\ref{thm:tracial:exist}, there is a unital embedding $\theta \colon B \rightarrow A^\infty$ such that $T(\theta) = \gamma$.  Then $T(\theta\phi) = T(q_A \iota_A)$.  By Theorem~\ref{thm:tracial}\ref{thm:tracial:unique}, $\theta\phi$ and $q_A \iota_A$ are unitarily equivalent.  
	
	As $[\phi]$ is invertible in $KK(A, B)$, so is the natural transformation 
	\begin{equation}
		\phi^* \colon KK(B,\,\cdot\,) \rightarrow KK(A, \,\cdot\,). 
	\end{equation}
     Let $\kappa = (\phi^*)^{-1}([\iota_A]) \in KK(B, A_\infty)$.  Then $\kappa_0([1_B]) = [1_{A_\infty}] \in K_0(A_\infty)$ since both $\iota_A$ and $\phi$ are unital.  Also, 
	\begin{equation}
		\phi^*((q_A)_*(\kappa)) = [q_A \iota_A] = [\theta \phi] = \phi^*([\theta]) \in KK(A, A^\infty).
	\end{equation}
	Since $\phi^*$ is invertible, this implies $(q_A)_*(\kappa) = [\theta]$, and hence by Theorem~\ref{thm:lifts1}, there is a unital embedding $\psi_\infty' \colon B \rightarrow A_\infty$ such that $q_A \psi_\infty' = \theta$.  Since $q_A \iota_A$ is unitarily equivalent to $\theta \phi = q_A \psi_\infty' \phi$,
	\begin{equation}
		T(\iota_A)T(q_A) = T(q_A \iota_A) = T(q_A \psi_\infty' \phi) = T(\psi_\infty' \phi) T(q_A),
	\end{equation}
	and since $T(q_A) \colon T(A^\infty) \rightarrow T(A_\infty)$ is invertible (see \cite[Proposition~6.3(i)]{CGSTW}, for example),  $T(\iota_A) = T(\psi_\infty' \phi)$.
	
	By Lemma~\ref{lem:main}, there is a unital embedding $\psi \colon B \rightarrow A$ such that $\psi \phi$ is approximately unitarily equivalent to $\mathrm{id}_A$.  In particular, as $KL$ and $T$ are invariant under approximate unitary equivalence,
	\begin{equation} 
		[\psi] = [\phi]^{-1} \in KL(B, A) \quad \text{and} \quad T(\psi) = T(\phi)^{-1} \colon T(A) \rightarrow T(B)	
	\end{equation}
	are invertible.  Let $\phi_\infty' = \iota_B \phi \colon B \rightarrow A_\infty$ and note that 
	\begin{equation}
		T(\phi_\infty' \psi) = T(\iota_B \phi \psi) = T(\psi) T(\phi) T(\iota_B) = T(\iota_B).
	\end{equation} 
	Applying Lemma~\ref{lem:main} with the roles of $A$ and $B$ reversed and with $\psi$ and $\phi_\infty'$ in place of $\phi$ and $\psi_\infty'$, there is a unital embedding $\phi' \colon A \rightarrow B$ such that $\phi' \psi$ is approximately unitarily equivalent to $\mathrm{id}_B$.

	Note that $\phi' \psi \phi$ is approximately unitarily equivalent to both $\mathrm{id}_B \phi = \phi$ and $\phi' \mathrm{id}_A = \phi'$, and in particular, $\phi$ and $\phi'$ are approximately unitarily equivalent.  It follows that $\phi$ and $\psi$ are inverses up to approximate unitary equivalence.  Finally, Elliott's intertwining argument (see \cite[Corollary~2.3.4]{Rordam-Stormer}, for example), implies that $\phi$ is approximately unitarily equivalent to an isomorphism $A \rightarrow B$.
\end{proof}

The homotopy rigidity result in the introduction is now immediate.

\begin{proof}[Proof of Corollary~\ref{cor:main}]
	Suppose that $A$ and $B$ are unital separable simple nuclear $\mathcal Z$-stable C$^*$-algebras for which projections separate traces and let $\phi \colon A \rightarrow B$ be a homotopy equivalence with homotopy inverse $\psi \colon B \rightarrow A$.  Then $\phi$ is an embedding since $A$ is simple and $[\phi] \in KK(A, B)$ is invertible with inverse $[\psi] \in KK(B, A)$.  By Theorem~\ref{thm:main-formal}, it suffices to show that $\phi$ is unital and $T(\phi)$ is invertible.
		
	Since $\phi\psi$ is homotopic to $\mathrm{id}_B$, there is a continuous path of projections from $\phi(\psi(1_B))$ to $1_B$, and hence $\phi(\psi(1_B)) = 1_B$.  Therefore, 
	\begin{equation}
		1_B = \phi(\psi(1_B)) \leq \phi(1_A) \leq 1_B,
	\end{equation}
	and hence $\phi$ is unital.  Note that by symmetry, $\psi$ is also unital.
	
	It remains to show $T(\phi)$ is invertible.  Fix $\tau \in T(A)$.  If $p \in A$ is a projection, then since $\psi\phi$ is homotopic to $\mathrm{id}_A$, $\psi(\phi(p))$ is homotopic to $p$ through a path of projections in $A$.  In particular, $\psi(\phi(p))$ and $p$ are unitarily equivalent, and hence $\tau(\psi(\phi(p))) = \tau(p)$.  Since this holds for all projections $p \in A$ and projections in $A$ separate traces, $\tau \psi \phi = \tau$.  Therefore, $T(\phi)T(\psi) = \mathrm{id}_{T(A)}$.  By symmetry, $T(\psi) T(\phi) = \mathrm{id}_{T(B)}$.  So $T(\phi)$ is invertible with inverse $T(\psi)$.
\end{proof}

\section{The strongly self-absorbing case}\label{sec:ssa}

Strongly self-absorbing C$^*$-algebras were introduced by Toms and Winter in \cite{Toms-Winter07} (cf.\ \cite{Kirchberg04}).  A unital separable infinite dimensional C$^*$-algebra $A$ is strongly self-absorbing if the first factor embedding $\mathrm{id}_A \otimes 1_A \colon A \rightarrow A \otimes A$ is approximately unitarily equivalent to an isomorphism.  Any such C$^*$-algebra $A$ has an approximately inner half flip in the sense that the embeddings 
\begin{equation}
	\mathrm{id}_A \otimes 1_A, 1_A \otimes \mathrm{id}_A \colon A \rightarrow A \otimes A
\end{equation}
are approximately unitarily equivalent (\cite[Proposition~1.5]{Toms-Winter07}), and then $A$ is simple and nuclear by \cite[Lemma~3.10]{Kirchberg-Phillips00} (cf.\ \cite[Propositions~2.7 and~2.8]{Effros-Rosenberg78}).  Also, $A$ is either purely infinite or has unique trace, as is noted in \cite[Theorem~1.7]{Toms-Winter07}.  Finally, note that $A$ is $\mathcal Z$-stable by main result of \cite{Winter11}.

Strongly self-absorbing C$^*$-algebras are very rigid, which makes them a natural test case for general problems about simple nuclear C$^*$-algebras, a viewpoint promoted by Winter (see \cite[Section~4.7]{Winter17}, for example).  Part of what makes the classification of strongly self-absorbing C$^*$-algebras more accessible than the general case is that for any two strongly self-absorbing C$^*$-algebras $A$ and $B$, there is at most one unital embedding $A \rightarrow B_\infty$ up to unitary equivalence.  This reduces the classification problem to proving an approximate existence theorem, characterizing when there exists a unital embedding $A \rightarrow B_\infty$.  This reduction will be used in the form of the following well-known result.

\begin{proposition}\label{prop:ssa-embedding}
	If $A$ and $B$ are strongly self-absorbing $C^*$-algebras, the following statements are equivalent.
	\begin{enumerate}
		\item\label{absorb} $A \otimes B \cong B$.
		\item\label{embed} There is a unital embedding $A \rightarrow B$.
		\item\label{approx-embed} There is a unital embedding $A \rightarrow B_\infty$.
	\end{enumerate}
\end{proposition}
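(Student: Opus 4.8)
The plan is to prove the cyclic chain of implications \ref{absorb} $\Rightarrow$ \ref{embed} $\Rightarrow$ \ref{approx-embed} $\Rightarrow$ \ref{absorb}. The first implication is trivial: if $A \otimes B \cong B$, then composing the first-factor embedding $A \to A \otimes B$ with an isomorphism $A \otimes B \to B$ gives a unital embedding $A \to B$ (unitality is automatic, and it is an embedding because $A$ is simple). The second implication is equally trivial, since $B$ embeds unitally into $B_\infty$ via the constant-sequence inclusion $\iota_B$; composing with a unital embedding $A \to B$ yields a unital embedding $A \to B_\infty$.

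The substance is in the implication \ref{approx-embed} $\Rightarrow$ \ref{absorb}. Suppose $\theta \colon A \to B_\infty$ is a unital embedding. The first goal is to improve $\theta$ to a unital embedding whose image lies in the central sequence algebra $B_\infty \cap B'$. To do this I would use the approximately inner half-flip of $B$: since $B$ is strongly self-absorbing, $B \cong B \otimes B$ with the first-factor embedding approximately unitarily equivalent to an isomorphism, which lets one produce a unital embedding $B \otimes B_\infty \to B_\infty$ that restricts to the identity on the constant copy of $B$ on one tensor factor and to (a conjugate of) the obvious inclusion on the other; precomposing with $\mathrm{id}_B \otimes \theta$ and using that $B$ commutes with the image then yields a unital $^*$-homomorphism $A \to B_\infty \cap B'$. (Equivalently, one invokes the standard fact—e.g.\ via Proposition~\ref{prop:central-surj} together with the reindexing/McDuff-type argument—that for strongly self-absorbing $B$ a unital embedding $A \to B_\infty$ can be replaced by one into $B_\infty \cap B'$.) Once such a central embedding $A \to B_\infty \cap B'$ exists, one runs the usual McDuff-type argument: a unital $^*$-homomorphism $A \to B_\infty \cap B'$ produces, by a standard reindexing trick, a unital $^*$-homomorphism $A \otimes B \to B_\infty$ that agrees with $\iota_B$ on the copy of $B$; this is exactly the criterion (the analogue of the McDuff/Toms--Winter characterization of $\mathcal D$-stability) for $A \otimes B \cong B$, using that $A$, $B$ are separable and that $B$ has approximately inner half-flip so that the embedding is unique up to approximate unitary equivalence and one gets an honest isomorphism $A \otimes B \xrightarrow{\sim} B$ by an intertwining argument.

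The main obstacle I expect is the central-embedding step: upgrading an arbitrary unital embedding $A \to B_\infty$ to one landing in $B_\infty \cap B'$. This is where strong self-absorption of $B$ (not merely of $A$) is essential, and it is the only place the hypotheses are used in a nontrivial way. The cleanest route is to exploit the approximately inner half-flip of $B$ to commute $B$ past the image of $\theta$: concretely, fix an isomorphism $\sigma \colon B \to B \otimes B$ approximately unitarily equivalent to $b \mapsto b \otimes 1_B$, take its sequential amplification $\sigma_\infty \colon B_\infty \to (B \otimes B)_\infty$, note $(B \otimes B)_\infty \supseteq B \otimes B_\infty$ contains a unital copy of $B_\infty$ commuting with the first-factor copy of $B$, and transport $\theta$ through this to land in $B_\infty \cap B'$; the approximate innerness of the half-flip is what makes $\sigma_\infty$ well-behaved on the constant sequences. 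Everything else—the reindexing to pass from $B_\infty \cap B'$ to a map $A \otimes B \to B_\infty$, and the final intertwining argument producing the isomorphism—is routine and standard (see, e.g., \cite{Toms-Winter07}).
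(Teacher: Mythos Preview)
Your argument is correct, but the paper takes a considerably shorter route for the only nontrivial implication \ref{approx-embed}$\Rightarrow$\ref{absorb}. Instead of upgrading the embedding to land in $B_\infty \cap B'$ and then running a McDuff/Toms--Winter intertwining by hand, the paper simply observes that since $A$ is nuclear, the Choi--Effros lifting theorem turns a unital embedding $A \to B_\infty$ into a sequence of unital completely positive maps $\phi_n \colon A \to B$ that are asymptotically multiplicative in norm, and then invokes \cite[Proposition~4.2]{Dadarlat-Winter09} as a black box to conclude $B \cong A \otimes B$.

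The two approaches are closely related---the Dadarlat--Winter proposition is proved by essentially the mechanism you describe---so what you have written is, in effect, an unpacking of that citation. One point worth noting: your central-embedding step leans on the strong self-absorption of $B$ (via the half-flip of $B$) to commute the image of $\theta$ past $B$, whereas the cited result needs only that $A$ is strongly self-absorbing (it is the half-flip of $A$ that does the work there). In the present proposition both hypotheses are available, so this costs you nothing, but it does mean the paper's route is nominally more general in the second variable. Conversely, your approach is more self-contained and makes the role of the central sequence algebra explicit, which aligns well with how the rest of the paper uses $B_\infty$ and $B_\infty \cap B'$.
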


\begin{proof}
	The downward implications are clear.  For \ref{approx-embed}$\Rightarrow$\ref{absorb}, by the Choi--Effros lifting theorem (\cite[Theorem~3.10]{Choi-Effros}), any unital embedding $A \rightarrow B_\infty$ lifts to a sequence of unital completely positive maps $\phi_n \colon A \rightarrow B$, which necessarily satisfy
	\begin{equation}
		\lim_{n \rightarrow \infty} \|\phi_n(a_1a_2) - \phi_n(a_1) \phi_n(a_2)\| = 0, \quad a_1, a_2 \in A.
	\end{equation}
	Then $B$ is $A$-stable by \cite[Proposition~4.2]{Dadarlat-Winter09}.
\end{proof}

The main extra ingredient that allows for the classification of strongly self-absorbing C$^*$-algebras in Theorem~\ref{thm:main-ssa} is the following result of Dadarlat and Winter, which is a special case of the UCT satisfied by self-absorbing C$^*$-algebras.

\begin{theorem}[{\cite[Theorem~3.4]{Dadarlat-Winter09}}]\label{thm:ssa-uct}
	If $A$ is a strongly self-absorbing $C^*$-algebra and $E$ is a separably $A$-stable $C^*$-algebra in the sense of \cite[Definition~1.4]{Schafhauser20b}, then there is an isomorphism
	$KK(A, E) \rightarrow K_0(E) \colon \kappa \mapsto \kappa_0([1_A])$.
\end{theorem}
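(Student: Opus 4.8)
The plan is to recognize the map in the statement as a Kasparov product and to build an explicit inverse out of the defining features of strong self-absorption. Write $u_A \colon \mathbb{C} \to A$ for the unital embedding and identify $[u_A] \in KK(\mathbb{C}, A)$ with $[1_A] \in K_0(A)$. For $\kappa \in KK(A, E)$ one has $\kappa_0([1_A]) = [u_A] \cdot \kappa \in KK(\mathbb{C}, E) = K_0(E)$, where $\cdot$ denotes the Kasparov (composition) product; thus the map $\Phi(\kappa) = \kappa_0([1_A])$ is left multiplication by $[u_A]$. The heart of the argument is that $[u_A]$ becomes invertible after tensoring with $A$. Indeed, viewing $u_A \otimes \mathrm{id}_A$ as a map $A = \mathbb{C} \otimes A \to A \otimes A$ gives $u_A \otimes \mathrm{id}_A = 1_A \otimes \mathrm{id}_A$; the approximately inner half flip of $A$ yields $[1_A \otimes \mathrm{id}_A] = [\mathrm{id}_A \otimes 1_A]$ in $KK(A, A \otimes A)$, and strong self-absorption makes $\mathrm{id}_A \otimes 1_A$ approximately unitarily equivalent to an isomorphism (\cite{Toms-Winter07}). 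Hence the external product $e := [u_A] \otimes 1_A = [1_A \otimes \mathrm{id}_A] = [\mathrm{id}_A \otimes 1_A]$ is invertible in $KK(A, A \otimes A)$.

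Next I would reduce to the case that $E$ is separable and $A$-stable. Both $KK(A, -)$ and $K_0(-)$ are continuous: $KK(A, E) = \varinjlim KK(A, E_0)$ over separable subalgebras $E_0 \subseteq E$ by \eqref{eq:kk-sep}, and likewise for $K_0$, and the natural transformation $\Phi$ respects these limits. By the definition of separably $A$-stable (\cite[Definition~1.4]{Schafhauser20b}), the separable subalgebras $E_0 \subseteq E$ with $E_0 \cong E_0 \otimes A$ form a cofinal directed family, so it suffices to prove that $\Phi$ is an isomorphism when $E$ itself is separable with $E \cong E \otimes A$.

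For such $E$, the first-factor embedding $\mathrm{id}_E \otimes 1_A \colon E \to E \otimes A$ is approximately unitarily equivalent to an isomorphism (\cite{Toms-Winter07}), so $[\sigma_E] := [\mathrm{id}_E \otimes 1_A] = 1_E \otimes [u_A] \in KK(E, E \otimes A)$ is invertible; write $[\mu_E]$ for its inverse. I would then define
\[
	\Psi \colon K_0(E) \to KK(A, E), \qquad \Psi(x) = (x \otimes 1_A) \cdot [\mu_E],
\]
where $x \otimes 1_A \in KK(A, E \otimes A)$ is an external product. Using the compatibility of the Kasparov and external products, $(\alpha_1 \otimes \alpha_2) \cdot (\beta_1 \otimes \beta_2) = (\alpha_1 \cdot \beta_1) \otimes (\alpha_2 \cdot \beta_2)$ (see \cite{Blackadar98}), one checks $\Phi \Psi = \mathrm{id}$ from $[u_A] \cdot (x \otimes 1_A) = (\mathrm{id}_E \otimes 1_A)_*(x) = x \cdot [\sigma_E]$, giving $\Phi(\Psi(x)) = x \cdot [\sigma_E] \cdot [\mu_E] = x$. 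For the other composite, $\Phi(\kappa) \otimes 1_A = ([u_A] \otimes 1_A) \cdot (\kappa \otimes 1_A) = e \cdot (\kappa \otimes 1_A)$; rewriting $e = [\mathrm{id}_A \otimes 1_A] = 1_A \otimes [u_A]$ via the half flip realigns the active tensor factor and yields $e \cdot (\kappa \otimes 1_A) = \kappa \otimes [u_A] = \kappa \cdot [\sigma_E]$, so $\Psi(\Phi(\kappa)) = \kappa \cdot [\sigma_E] \cdot [\mu_E] = \kappa$.

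The main obstacle is the invertibility of $e$ together with the bookkeeping of the two tensor factors in $\Psi \Phi$: it is precisely the equality $[1_A \otimes \mathrm{id}_A] = [\mathrm{id}_A \otimes 1_A]$ coming from the approximately inner half flip that moves the factor carrying $\kappa$ into alignment with $[\sigma_E]$, and this is the one step where strong self-absorption (rather than mere $A$-stability of $E$) is essential; without it $e \cdot (\kappa \otimes 1_A)$ would feed $\kappa$ into the factor occupied by the unit and lose all information about $\kappa$. A secondary point needing care is that the reduction in the second step is legitimate, namely that \cite[Definition~1.4]{Schafhauser20b} genuinely supplies a cofinal directed family of separable $A$-stable subalgebras and that for each of them the first-factor embedding is approximately unitarily equivalent to an isomorphism, so that $[\sigma_E]$ is invertible.
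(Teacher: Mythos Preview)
Your reduction to the separable $A$-stable case is exactly what the paper does; the paper then simply cites \cite[Theorem~3.4]{Dadarlat-Winter09} as a black box, whereas you attempt to re-derive that result. The structure of your construction of $\Psi$ and the bookkeeping with external and Kasparov products are correct and mirror the Dadarlat--Winter argument.

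There is, however, a genuine gap at the step you yourself flag as the main obstacle. You repeatedly pass from \emph{approximate} unitary equivalence to equality in $KK$: once to get $[1_A\otimes\mathrm{id}_A]=[\mathrm{id}_A\otimes 1_A]$ from the approximately inner half flip, and again to conclude that $[\sigma_E]=[\mathrm{id}_E\otimes 1_A]$ is $KK$-invertible from its being approximately unitarily equivalent to an isomorphism. Approximate unitary equivalence only gives equality in $KL$, not in $KK$; this is precisely the distinction the paper emphasizes in Section~\ref{sec:main}. What is actually needed is that for strongly self-absorbing $A$ these approximate unitary equivalences upgrade to \emph{asymptotic} (continuous-path) unitary equivalences, which does yield $KK$-equality. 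That upgrade is the technical heart of \cite{Dadarlat-Winter09} (their Theorem~2.2, using $K_1$-injectivity, which in turn comes from $\mathcal Z$-stability via \cite{Winter11}). Citing only \cite{Toms-Winter07} for the approximately inner half flip is not enough; once you invoke the asymptotic version you are essentially quoting the substantive content of \cite[Theorem~3.4]{Dadarlat-Winter09}, so your argument collapses back to the paper's citation rather than providing an independent proof.
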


\begin{proof}
	Since $E$ is separably $A$-stable, the set of separable $A$-stable subalgebras of $E$ is cofinal in the set of separable subalgebras of $E$.	Therefore,
	\begin{equation}
		KK(A, E) = \varinjlim\, KK(A, E_0) \quad \text{and} \quad K_0(E) = \varinjlim\, K_0(E_0),
	\end{equation}
	with the limits taking over all separable $A$-stable subalgebras $E_0 \subseteq E$.  By the naturality of the map $\kappa \mapsto \kappa_0([1_A])$, it suffices to prove the result when $E$ is separable and $A$-stable.  This is the content of \cite[Theorem~3.4]{Dadarlat-Winter09}.
\end{proof}

The previous result will be applied when $E$ is the tracial sequence algebra of a finite strongly self-absorbing C$^*$-algebra.  The key observation is that since all finite strongly self-absorbing C$^*$-algebras have the same tracial sequence algebra, if $A$ and $B$ are finite strongly self-absorbing C$^*$-algebras, then $B^\infty$ is separably $A$-stable.

\begin{proposition}\label{prop:tracial-ultrapower}
	Suppose $A$ is a finite strongly self-absorbing $C^*$-algebra. Then $A^\infty \cong R^\infty$, where $R$ is the separably acting hyperfinite ${\rm II}_1$ factor.
\end{proposition}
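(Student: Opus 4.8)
The plan is to realize $A^\infty$ as the tracial sequence algebra of the von Neumann algebra generated by $A$ in the GNS representation of its unique trace, and then to recognize that von Neumann algebra as $R$ by way of Connes' classification of the injective ${\rm II}_1$ factor. Being a finite strongly self-absorbing C$^*$-algebra, $A$ is unital, separable, simple, nuclear, and has a unique tracial state $\tau$ (as recalled at the beginning of this section). The trace $\tau$ is faithful, since $\{a \in A : \tau(a^*a) = 0\}$ is a closed two-sided ideal not containing $1_A$, hence zero by simplicity of $A$. Write $\|a\|_{2,\tau} = \tau(a^*a)^{1/2}$; the GNS representation $\pi_\tau \colon A \to B(L^2(A, \tau))$ is then faithful and isometric for $\|\cdot\|_{2,\tau}$. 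Put $M = \pi_\tau(A)''$, a von Neumann algebra with separable predual (as $L^2(A, \tau)$ is separable), equipped with the faithful normal tracial state $\tau_M$ induced by the GNS vector, which restricts to $\tau$ along $\pi_\tau$. The proposition will follow once I establish that $A^\infty \cong M^\infty$ and $M \cong R$: any $^*$-isomorphism $M \to R$ is automatically trace preserving, since both are ${\rm II}_1$ factors and hence have a unique tracial state, so it induces a $^*$-isomorphism of the corresponding tracial sequence algebras $M^\infty \cong R^\infty$.

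To see $M \cong R$, I would check that $M$ is an injective ${\rm II}_1$ factor with separable predual and appeal to Connes' theorem (\cite{Connes76}). Injectivity of $M$ is the standard consequence of nuclearity of $A$: the completely positive approximation property of $A$ passes to the weak closure and exhibits $M$ as injective. That $M$ is a factor uses the unique trace of $A$: every normal tracial state on $M$ restricts along $\pi_\tau$ to a tracial state on $A$, necessarily $\tau$, so any two normal tracial states on $M$ agree on the $\sigma$-weakly dense subalgebra $\pi_\tau(A)$ and therefore coincide by normality; were $M$ not a factor, composing the center-valued trace of $M$ with two distinct normal states on the nontrivial abelian algebra $Z(M)$ would give two distinct normal tracial states on $M$, a contradiction. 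Since $M$ contains the infinite dimensional algebra $A$, it is itself infinite dimensional, so it is a finite factor of type ${\rm II}_1$, and Connes' theorem yields $M \cong R$.

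For the isomorphism $A^\infty \cong M^\infty$, observe that $\pi_\tau$, being isometric for $\|\cdot\|_{2,\tau}$, induces a $^*$-homomorphism $A^\infty \to M^\infty$; this is injective, because a representing sequence $(a_n)_{n=1}^\infty$ mapping to $0$ satisfies $\|a_n\|_{2,\tau} \to 0$ and so already represents $0$ in $A^\infty$. Surjectivity is where the Kaplansky density theorem enters: given a bounded sequence $(x_n)_{n=1}^\infty$ in $M$, choose for each $n$ an element $a_n \in A$ with $\|a_n\| \le \|x_n\|$ and $\|\pi_\tau(a_n) - x_n\|_{2,\tau_M} < 1/n$; then $(a_n)_{n=1}^\infty \in \ell^\infty(A)$ and its image in $M^\infty$ is the class of $(x_n)_{n=1}^\infty$. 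Combining with $M^\infty \cong R^\infty$ gives $A^\infty \cong R^\infty$. The only genuinely substantive input is the identification $M \cong R$, resting on nuclearity of $A$ (giving injectivity of $M$), the unique trace of $A$ (making $M$ a factor), and Connes' classification of the injective ${\rm II}_1$ factor; the passage to and from the sequence algebras is a routine Kaplansky density argument, so I anticipate no real obstacle beyond assembling these ingredients.
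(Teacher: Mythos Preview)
Your proof is correct and follows essentially the same approach as the paper: identify $\pi_\tau(A)''$ with $R$ via Connes' theorem (using nuclearity for injectivity, the unique trace for factoriality, and infinite dimensionality for type~${\rm II}_1$), then use Kaplansky's density theorem to show that $\pi_\tau$ induces an isomorphism $A^\infty \cong R^\infty$. The only differences are expository---you supply a direct argument for factoriality where the paper cites Dixmier, and you explicitly note that any isomorphism $M\to R$ is trace-preserving---but the underlying argument is the same.
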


\begin{proof}
	This is a standard consequence of Connes's theorem and Kaplansky's density theorem.  Let $\tau$ denote the unique trace on $A$ and note that $\pi_\tau(A)''$ is a separably acting finite von Neumann algebra, which is injective as $A$ is nuclear (\cite[Theorem~6.4(iv)$\Rightarrow$(v) and Theorem~6.2(i)$\Rightarrow$(ii)]{Effros-Lance77}).  Further, since $\tau$ is the unique trace on $A$, $\pi_\tau(A)''$ is a factor (see \cite[Thoerem~6.7.3]{Dixmier77}, for example).  Since $A$ is simple, and infinite dimensional, $\pi_\tau(A)''$ is also infinite dimensional.  Therefore, $\pi_\tau(A)''$ is a separably acting injective II$_1$ factor, and by Connes's theorem (\cite[Theorem~7.1]{Connes76}), it is isomorphic to $R$.
	
	Identify $\pi_\tau(A)''$ with $R$ and, abusing notation, let $\tau$ also denote the unique trace on $R$.  Define $\pi_\tau^\infty \colon A^\infty \rightarrow R^\infty$ on representing sequences by $(a_n)_{n=1}^\infty \mapsto (\pi_\tau(a_n))_{n=1}^\infty$.  Since $\pi_\tau$ is isometric with respect to the 2-norms, $\pi_\tau^\infty$ is faithful.  For any $b \in R^\infty$ represented by a bounded sequence $(b_n)_{n=1}^\infty \subseteq R$, by Kaplansky's density theorem,\footnote{This is using the standard fact that the the strong topology on the unit ball of $R$ is induced by the 2-norm $\|b\|_2 = \tau(b^*b)^{1/2}$. See \cite[Proposition~III.5.3]{Takesaki79}, for example.} there is a sequence $(a_n)_{n=1}^\infty \subseteq A$ such that $\| \pi_\tau(a_n) - b_n\|_2 < 1/n$ and $\|a_n\| \leq \|b_n\|$.  In particular, $(a_n)_{n=1}^\infty \subseteq A$ is a bounded sequence and hence defines an element $a \in A^\infty$.  By construction, $\pi_\tau^\infty(a) = b$, proving $\pi_\tau^\infty$ is surjective.
\end{proof}

The following result provides a one-sided version of Theorem~\ref{thm:main-ssa}, characterizing when one finite strongly self-absorbing C$^*$-algebra unitally embeds into another.

\begin{theorem}\label{thm:ssa-embedding}
	For finite strongly self-absorbing $C^*$-algebras $A$ and $B$,  there is a unital embedding $A \rightarrow B$ if and only if there is $\kappa \in KK(A, B)$ with $\kappa_0([1_A]) = [1_B]$.
\end{theorem}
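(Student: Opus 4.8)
The forward implication is immediate: a unital embedding $\phi \colon A \to B$ induces $[\phi] \in KK(A, B)$ with $[\phi]_0([1_A]) = K_0(\phi)([1_A]) = [1_B]$. So the plan is to prove the converse. Suppose $\kappa \in KK(A, B)$ satisfies $\kappa_0([1_A]) = [1_B]$. By Proposition~\ref{prop:ssa-embedding}, it suffices to produce a unital embedding $A \to B_\infty$, and I would obtain one by lifting a unital embedding $A \to B^\infty$ along $q_B$ through the trace-kernel extension of $B$, invoking Theorem~\ref{thm:lifts1}.

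In more detail: since $A$ has a unique trace, there is a (unique) continuous affine map $T(B^\infty) \to T(A)$, and Theorem~\ref{thm:tracial}\ref{thm:tracial:exist} then supplies a unital embedding $\theta \colon A \to B^\infty$. Write $\iota_B \colon B \to B_\infty$ and $c_B \colon B \to B^\infty$ for the embeddings as constant sequences, so that $q_B \iota_B = c_B$, and set $\tilde\kappa = (\iota_B)_*(\kappa) \in KK(A, B_\infty)$. Unitality of $\iota_B$ gives $\tilde\kappa_0([1_A]) = K_0(\iota_B)([1_B]) = [1_{B_\infty}]$, and functoriality of the pushforward gives $(q_B)_*(\tilde\kappa) = (c_B)_*(\kappa)$. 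Hence the hypotheses of Theorem~\ref{thm:lifts1} will be satisfied — producing a unital embedding $\psi \colon A \to B_\infty$ with $q_B\psi = \theta$, and therefore, via Proposition~\ref{prop:ssa-embedding}, a unital embedding $A \to B$ — as soon as I check that $(c_B)_*(\kappa) = [\theta]$ in $KK(A, B^\infty)$.

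Establishing that last equality is the one step that genuinely uses strong self-absorption, and it is the crux of the argument (everything else being formal bookkeeping with $K_0$-classes and the constant-sequence maps). As recorded before Proposition~\ref{prop:tracial-ultrapower}, $B^\infty \cong R^\infty \cong A^\infty$ is separably $A$-stable, so Theorem~\ref{thm:ssa-uct} shows that $KK(A, B^\infty) \to K_0(B^\infty)$, $\mu \mapsto \mu_0([1_A])$, is an isomorphism, in particular injective. Now $(c_B)_*(\kappa)$ is carried to $K_0(c_B)([1_B]) = [1_{B^\infty}]$ and $[\theta]$ is carried to $[1_{B^\infty}]$ since $\theta$ is unital, so the two classes coincide. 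The work, then, is really all imported — in the lifting theorem, Theorem~\ref{thm:lifts1}, and in Proposition~\ref{prop:tracial-ultrapower} together with the $A$-stable UCT of Theorem~\ref{thm:ssa-uct}; the role of this proof is to recognize that, precisely because $KK(A, B^\infty)$ is detected by $K_0$, no compatibility between $\kappa$ and the traces on $B^\infty$ is needed, and the class $\tilde\kappa = (\iota_B)_*(\kappa)$ automatically fits the lifting criterion.
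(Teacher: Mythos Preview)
Your proof is correct and follows essentially the same route as the paper's: push $\kappa$ forward along $\iota_B$, use Theorem~\ref{thm:ssa-uct} (with separable $A$-stability of $B^\infty$ coming from Proposition~\ref{prop:tracial-ultrapower}) to match $(q_B\iota_B)_*(\kappa)$ with $[\theta]$ in $KK(A,B^\infty)$, then apply Theorem~\ref{thm:lifts1} and Proposition~\ref{prop:ssa-embedding}. The only cosmetic difference is that the paper obtains $\theta$ directly from the isomorphism $A^\infty \cong B^\infty$ rather than via Theorem~\ref{thm:tracial}\ref{thm:tracial:exist}.
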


\begin{proof}
	Applying Proposition~\ref{prop:tracial-ultrapower} to both $A$ and $B$ implies $A^\infty \cong B^\infty$.  In particular, there is a unital embedding $\theta \colon A \rightarrow B^\infty$.  Further, this isomorphism also implies $B^\infty$ is a quotient of $\ell^\infty(A)$, so $B^\infty$ is separably $A$-stable by \cite[Proposition~1.12]{Schafhauser20b}.  Note that
	\begin{equation} 	
		(q_B\iota_B)_*(\kappa)_0([1_A]) = K_0(q_B\iota_B)(\kappa_0([1_A])) = [1_{B^\infty}] = K_0(\theta)([1_A]). 
	\end{equation}
	Then Theorem~\ref{thm:ssa-uct} implies $(q_B\iota_B)_*(\kappa) = [\theta]$, and Theorem~\ref{thm:lifts1} implies there is a unital embedding $A \rightarrow B_\infty$.  The result follows from Proposition~\ref{prop:ssa-embedding}.
\end{proof}

Symmetrizing the previous theorem gives Theorem~\ref{thm:main-ssa}.

\begin{proof}[Proof of Theorem~\ref{thm:main-ssa}]
	Let $A$ and $B$ be finite strongly self-absorbing C$^*$-algebras and let $\kappa \in KK(A, B)$ be invertible with $\kappa_0([1_A]) = [1_B]$.  Applying Theorem~\ref{thm:ssa-embedding} to both $\kappa$ and $\kappa^{-1}$, there are unital embeddings $A \rightarrow B$ and $B \rightarrow A$.  Then two applications of Proposition~\ref{prop:ssa-embedding} yield $A \cong A \otimes B \cong B$.
\end{proof}

As a consequence of Theorem~\ref{thm:main-ssa}, both the classification problem and embedding problem for strongly self-absorbing C$^*$-algebras can be reduced to the infinite case.

\begin{corollary}\label{cor:ssa-embedding}
	Suppose $A$ and $B$ are finite strongly self-absorbing $C^*$-algebras.
	\begin{enumerate}
		\item\label{cor:ssa:emb} There is a unital embedding $A \rightarrow B$ if and only if there is a unital embedding $A \rightarrow B \otimes \mathcal O_\infty$.
		\item\label{cor:ssa:iso} $A \cong B$ if and only if $A \otimes \mathcal O_\infty \cong B \otimes \mathcal O_\infty$.
	\end{enumerate}
\end{corollary}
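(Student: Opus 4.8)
The plan is to deduce both parts from Theorems~\ref{thm:ssa-embedding} and~\ref{thm:main-ssa}, once one observes that $\mathcal{O}_\infty$ is $KK$-equivalent to $\mathbb{C}$ in a unit-preserving way. Indeed, the unital inclusion $\iota \colon \mathbb{C} \to \mathcal{O}_\infty$ induces an isomorphism on $K$-theory, since $K_0(\mathcal{O}_\infty) \cong \mathbb{Z}$ is generated by $[1_{\mathcal{O}_\infty}]$ and $K_1(\mathcal{O}_\infty) = 0$, and $\mathcal{O}_\infty$ satisfies the UCT; hence $[\iota] \in KK(\mathbb{C}, \mathcal{O}_\infty)$ is invertible. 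Tensoring with $B$ --- an operation on $KK$ which respects Kasparov products and units, and so preserves invertibility --- the first-factor embedding $\lambda_B = \mathrm{id}_B \otimes \iota \colon B \to B \otimes \mathcal{O}_\infty$ has invertible class in $KK(B, B \otimes \mathcal{O}_\infty)$; being unital, the induced map on $K_0$ sends $[1_B]$ to $[1_{B \otimes \mathcal{O}_\infty}]$. The analogous facts hold with $A$ in place of $B$, giving $\lambda_A \colon A \to A \otimes \mathcal{O}_\infty$.

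For \ref{cor:ssa:emb}, the ``only if'' direction is immediate: a unital embedding $A \to B$ followed by $\lambda_B$ is a unital embedding $A \to B \otimes \mathcal{O}_\infty$. For the converse, let $\phi \colon A \to B \otimes \mathcal{O}_\infty$ be a unital embedding, so $[\phi]_0([1_A]) = [1_{B \otimes \mathcal{O}_\infty}]$. Since Kasparov product with $\lambda_B$ gives an isomorphism $KK(A, B) \to KK(A, B \otimes \mathcal{O}_\infty)$, let $\kappa \in KK(A, B)$ be the preimage of $[\phi]$. Then $(\lambda_B)_0(\kappa_0([1_A])) = [\phi]_0([1_A]) = (\lambda_B)_0([1_B])$, and since $(\lambda_B)_0$ is injective, $\kappa_0([1_A]) = [1_B]$. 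Theorem~\ref{thm:ssa-embedding} then provides a unital embedding $A \to B$.

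For \ref{cor:ssa:iso}, the ``only if'' direction is trivial. For the converse, suppose $A \otimes \mathcal{O}_\infty \cong B \otimes \mathcal{O}_\infty$. Composing $\lambda_A$ with such an isomorphism gives a unital embedding $A \to B \otimes \mathcal{O}_\infty$, so part \ref{cor:ssa:emb} yields a unital embedding $A \to B$; by symmetry there is also a unital embedding $B \to A$. Two applications of Proposition~\ref{prop:ssa-embedding} give $A \otimes B \cong B$ and $B \otimes A \cong A$, and since the flip gives $A \otimes B \cong B \otimes A$, we conclude $A \cong B$. Alternatively, and more in the spirit of Theorem~\ref{thm:main-ssa}: an isomorphism $A \otimes \mathcal{O}_\infty \to B \otimes \mathcal{O}_\infty$ has invertible $KK$-class, which conjugated by $\lambda_A$ and $\lambda_B$ produces an invertible $\kappa \in KK(A, B)$ with $\kappa_0([1_A]) = [1_B]$, whence $A \cong B$ by Theorem~\ref{thm:main-ssa}.

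I do not expect a genuine obstacle here: both parts are bookkeeping built on Theorems~\ref{thm:ssa-embedding} and~\ref{thm:main-ssa}. The one input not already in the excerpt is the classical computation that $\mathcal{O}_\infty$ lies in the bootstrap class with $K_*(\mathcal{O}_\infty) = (\mathbb{Z}, 0)$ and $[1_{\mathcal{O}_\infty}]$ a generator; this is precisely what allows one to pass from $B$ to $B \otimes \mathcal{O}_\infty$ on the $K_0$-side of those theorems.
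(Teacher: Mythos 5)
Your proposal is correct and follows essentially the same route as the paper: both pull the class of a unital embedding $A \to B \otimes \mathcal O_\infty$ back along the $KK$-equivalence $\mathrm{id}_B \otimes 1_{\mathcal O_\infty}$, check the unit condition on $K_0$, apply Theorem~\ref{thm:ssa-embedding}, and then obtain \ref{cor:ssa:iso} by symmetrizing and invoking Proposition~\ref{prop:ssa-embedding} (your alternative via Theorem~\ref{thm:main-ssa} is also fine). The only difference is that you spell out why $\mathrm{id}_B \otimes 1_{\mathcal O_\infty}$ is a $KK$-equivalence, a standard fact the paper simply recalls.
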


\begin{proof}
	In both parts, the forward direction is trivial.  To see the reverse direction in \ref{cor:ssa:emb}, suppose $\phi \colon A \rightarrow B \otimes \mathcal O_\infty$ is a unital embedding.  Recall that the unital embedding $\mathrm{id}_B \otimes 1_{\mathcal O_\infty} \colon B \rightarrow B \otimes \mathcal O_\infty$ is a $KK$-equivalence. Then $\phi^*([\mathrm{id}_B \otimes 1_{\mathcal O_\infty}]^{-1})$ is an element of $KK(A, B)$ preserving the unit on $K_0$, and hence Theorem~\ref{thm:ssa-embedding} implies there is a unital embedding $A \rightarrow B$.  Then the reverse direction of  \ref{cor:ssa:iso} follows by symmetrizing \ref{cor:ssa:emb} and applying Proposition~\ref{prop:ssa-embedding} to obtain $A \cong A \otimes B \cong B$.
\end{proof}

The reduction in  Corollary~\ref{cor:ssa-embedding} has a connection to the quasidiagonality problem for finite strongly self-absorbing C$^*$-algebras.
Let $\mathcal Q$ denote the universal UHF algebra.  In Winter's survey article \cite{Winter17}, several variations of the quasidiagonality problem are considered, including the following questions (\cite[Sections~4.9 and~4.10]{Winter17}).
\begin{description}
	\item[\hskip 1.3ex $\textbf{QDQ}_{\mathrm{finite\ s.s.a.}}$] Are all finite strongly self-absorbing C$^*$-algebras quasidiagonal?
	
	 \vskip .5ex
	 
	\item[\hskip 1.3ex $\mathbf{QDQ}_{\mathrm{infinite\ s.s.a.}}$] Is $\mathcal Q \otimes \mathcal O_\infty$ the second largest strongly self-absorbing C$^*$-algebra?
\end{description} 
Winter also asks in \cite[Question~7.2]{Winter17} if there are any formal implications between these problems.  The following result is the implication
\begin{equation}
	\textbf{QDQ}_{\mathrm{infinite\ s.s.a.}} \Longrightarrow \textbf{QDQ}_{\mathrm{finite\ s.s.a.}}. 
\end{equation}

\begin{corollary}
	If every strongly self-absorbing $C^*$-algebra not isomorphic to $\mathcal O_2$ unitally embeds into $\mathcal Q \otimes \mathcal O_\infty$, then every finite strongly self-absorbing $C^*$-algebra is quasidiagonal.
\end{corollary}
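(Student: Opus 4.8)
The plan is to reduce the hypothesis, by way of Corollary~\ref{cor:ssa-embedding}, to the existence of a unital embedding of an arbitrary finite strongly self-absorbing C$^*$-algebra into $\mathcal Q$, from which quasidiagonality is immediate. So let $A$ be a finite strongly self-absorbing C$^*$-algebra; we must show $A$ is quasidiagonal. The first observation is that $\mathcal Q$ itself is a finite strongly self-absorbing C$^*$-algebra: it is unital, separable, infinite dimensional, has a (unique) trace, and $\mathcal Q \otimes \mathcal Q \cong \mathcal Q$ via an isomorphism approximately unitarily equivalent to $\mathrm{id}_{\mathcal Q} \otimes 1_{\mathcal Q}$. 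Hence Corollary~\ref{cor:ssa-embedding}\ref{cor:ssa:emb} applies with the given $A$ and with $B = \mathcal Q$: there is a unital embedding $A \to \mathcal Q$ if and only if there is a unital embedding $A \to \mathcal Q \otimes \mathcal O_\infty$.

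Since $A$ has a trace, $A \not\cong \mathcal O_2$, so the standing hypothesis furnishes a unital embedding $A \to \mathcal Q \otimes \mathcal O_\infty$, and therefore, by the equivalence just recorded, a unital embedding $A \to \mathcal Q$. Now $\mathcal Q$ is AF and in particular quasidiagonal, and quasidiagonality passes to unital C$^*$-subalgebras — restricting asymptotically multiplicative, asymptotically isometric unital completely positive maps $\mathcal Q \to M_{n_k}$ to $A$ exhibits $A$ as quasidiagonal (equivalently, a faithful quasidiagonal representation of $\mathcal Q$ restricts to one of $A$). This completes the argument.

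There is no serious obstacle here once Corollary~\ref{cor:ssa-embedding} is available; the only things to notice are that $\mathcal Q$ is a finite strongly self-absorbing C$^*$-algebra, so that the corollary applies with $B = \mathcal Q$, and that one loses nothing in passing from $\mathcal Q \otimes \mathcal O_\infty$ back to $\mathcal Q$. The substantive content — extracting a unital embedding into $\mathcal Q$ from $KK$-theoretic data, and hence deleting the $\mathcal O_\infty$ tensor factor — has already been carried out in Theorem~\ref{thm:ssa-embedding} and Corollary~\ref{cor:ssa-embedding}, so all that remains for this corollary is to assemble those pieces with the standard fact that quasidiagonality is inherited by unital subalgebras of the (AF, hence quasidiagonal) algebra $\mathcal Q$.
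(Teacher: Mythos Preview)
Your proof is correct and follows essentially the same approach as the paper: observe $A \not\cong \mathcal O_2$, use the hypothesis to get a unital embedding $A \to \mathcal Q \otimes \mathcal O_\infty$, apply Corollary~\ref{cor:ssa-embedding}\ref{cor:ssa:emb} with $B = \mathcal Q$ to obtain a unital embedding $A \to \mathcal Q$, and conclude quasidiagonality from that of $\mathcal Q$. You include a bit more justification (that $\mathcal Q$ is finite strongly self-absorbing and that quasidiagonality passes to unital subalgebras), but the argument is the same.
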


\begin{proof}
	If $A$ is a finite strongly self-absorbing C$^*$-algebra, then $A \not\cong \mathcal O_2$.  So by assumption, there is a unital embedding $A \rightarrow \mathcal Q \otimes \mathcal O_\infty$.  By Corollary~\ref{cor:ssa-embedding}\ref{cor:ssa:emb}, there is then a unital embedding $A \rightarrow \mathcal Q$, and as $\mathcal Q$ is quasidiagonal, so is $A$.	
\end{proof}


\begin{thebibliography}{10}
	
	\bibitem{Blackadar98}
	Bruce Blackadar.
	\newblock {\em {$K$}-theory for operator algebras}, volume~5 of {\em
		Mathematical Sciences Research Institute Publications}.
	\newblock Cambridge University Press, Cambridge, second edition, 1998.
	
	\bibitem{CGSTW}
	Jos\'e~R. Carri\'on, James Gabe, Christopher Schafhauser, Aaron Tikuisis, and
	Stuart White.
	\newblock Classifying $^*$-homomorphisms {I}: {U}nital simple nuclear
	{C$^*$}-algebras.
	\newblock arXiv:2307.06480.

	
	\bibitem{CETW}
	Jorge Castillejos, Samuel Evington, Aaron Tikuisis, and Stuart White.
	\newblock Classifying maps into uniform tracial sequence algebras.
	\newblock {\em M\"{u}nster J. Math.}, 14(2):265--281, 2021.
	
	\bibitem{CETWW}
	Jorge Castillejos, Samuel Evington, Aaron Tikuisis, Stuart White, and Wilhelm
	Winter.
	\newblock Nuclear dimension of simple {C$^*$}-algebras.
	\newblock {\em Invent. Math.}, 224(1):245--290, 2021.
	
	\bibitem{Choi-Effros}
	Man~Duen Choi and Edward~G. Effros.
	\newblock The completely positive lifting problem for {C$^*$}-algebras.
	\newblock {\em Ann. of Math. (2)}, 104(3):585--609, 1976.
	
	\bibitem{Connes76}
	Alain Connes.
	\newblock Classification of injective factors. {C}ases {II$_1$}, {II$_\infty$},
	{III$_\lambda$}, {$\lambda \neq 1$}.
	\newblock {\em Ann. of Math. (2)}, 104(1):73--115, 1976.
	
	\bibitem{Cuntz87}
	Joachim Cuntz.
	\newblock A new look at {$KK$}-theory.
	\newblock {\em $K$-Theory}, 1(1):31--51, 1987.
	
	\bibitem{Dadarlat05}
	Marius Dadarlat.
	\newblock On the topology of the {K}asparov groups and its applications.
	\newblock {\em J. Funct. Anal.}, 228(2):394--418, 2005.
	
	\bibitem{Dadarlat-Winter09}
	Marius Dadarlat and Wilhelm Winter.
	\newblock On the {$KK$}-theory of strongly self-absorbing {C$^*$}-algebras.
	\newblock {\em Math. Scand.}, 104(1):95--107, 2009.
	
	\bibitem{Dixmier77}
	Jacques Dixmier.
	\newblock {\em {$C\sp*$}-algebras}.
	\newblock North-Holland Mathematical Library, Vol. 15. North-Holland Publishing
	Co., Amsterdam-New York-Oxford, 1977.
	\newblock Translated from the French by Francis Jellett.
	
	\bibitem{Effros-Lance77}
	Edward~G. Effros and E.~Christopher Lance.
	\newblock Tensor products of operator algebras.
	\newblock {\em Adv. Math.}, 25(1):1--34, 1977.
	
	\bibitem{Effros-Rosenberg78}
	Edward~G. Effros and Jonathan Rosenberg.
	\newblock {C$^*$}-algebras with approximately inner flip.
	\newblock {\em Pacific J. Math.}, 77(2):417--443, 1978.
	
	\bibitem{Elliott95}
	George~A. Elliott.
	\newblock The classification problem for amenable {C$^*$}-algebras.
	\newblock In {\em Proceedings of the {I}nternational {C}ongress of
		{M}athematicians, {V}ol. 1, 2 ({Z}\"{u}rich, 1994)}, pages 922--932.
	Birkh\"{a}user, Basel, 1995.
	
	\bibitem{EGLN}
	George~A. Elliott, Guihua Gong, Huaxin Lin, and Zhuang Niu.
	\newblock On the classification of simple amenable {C$^*$}-algebras with finite
	decomposition rank, {II}.
	\newblock arXiv:1507.03437.
	
	\bibitem{Elliott-Kucerovsky01}
	George~A. Elliott and Dan Kucerovsky.
	\newblock An abstract {V}oiculescu--{B}rown--{D}ouglas--{F}illmore absorption
	theorem.
	\newblock {\em Pacific J. Math.}, 198(2):385--409, 2001.
	
	\bibitem{Gabe20}
	James Gabe.
	\newblock A new proof of {K}irchberg's {$\mathcal O_2$}-stable classification.
	\newblock {\em J. Reine Angew. Math.}, 761:247--289, 2020.
	
	\bibitem{Gabe19}
	James Gabe.
	\newblock Classification of {$\mathcal O_\infty$}-stable {C$^*$}-algebras.
	\newblock {\em Mem. Amer. Math. Soc.}, 293(1461):v+115, 2024.
	
	\bibitem{Gabe-Szabo}
	James Gabe and G\'{a}bor Szab\'{o}.
	\newblock The dynamical {K}irchberg--{P}hillips theorem.
	\newblock {\em Acta Math.}, 232(1):1--77, 2024.
	
	\bibitem{GLN1}
	Guihua Gong, Huaxin Lin, and Zhuang Niu.
	\newblock A classification of finite simple amenable {$\mathcal Z$}-stable
	{C$^*$}-algebras, {I}: {C$^*$}-algebras with generalized tracial rank one.
	\newblock {\em C. R. Math. Acad. Sci. Soc. R. Can.}, 42(3):63--450, 2020.
	
	\bibitem{GLN2}
	Guihua Gong, Huaxin Lin, and Zhuang Niu.
	\newblock A classification of finite simple amenable {$\mathcal Z$}-stable
	{C$^\ast$}-algebras, {II}: {C$^*$}-algebras with rational generalized tracial
	rank one.
	\newblock {\em C. R. Math. Acad. Sci. Soc. R. Can.}, 42(4):451--539, 2020.
	
	\bibitem{Haagerup87}
	Uffe Haagerup.
	\newblock Connes' bicentralizer problem and uniqueness of the injective factor
	of type {${\rm III}_1$}.
	\newblock {\em Acta Math.}, 158(1-2):95--148, 1987.
	
	\bibitem{Haagerup14}
	Uffe Haagerup.
	\newblock Quasitraces on exact {C$^*$}-algebras are traces.
	\newblock {\em C. R. Math. Acad. Sci. Soc. R. Can.}, 36(2-3):67--92, 2014.
	
	\bibitem{Jensen-Thomsen91}
	Kjeld~Knudsen Jensen and Klaus Thomsen.
	\newblock {\em Elements of {$KK$}-theory}.
	\newblock Mathematics: Theory \& Applications. Birkh\"{a}user Boston, Inc.,
	Boston, MA, 1991.
	
	\bibitem{Kirchberg}
	Eberhard Kirchberg.
	\newblock The classification of purely infinite {C$^*$}-algebras using
	{K}asparov's theory.
	\newblock Manuscript available at
	\url{https://ivv5hpp.uni-muenster.de/u/echters/ekneu1.pdf}.
	
	\bibitem{Kirchberg04}
	Eberhard Kirchberg.
	\newblock Central sequences in {C$^*$}-algebras and strongly purely infinite
	algebras.
	\newblock In {\em Operator {A}lgebras: {T}he {A}bel {S}ymposium 2004}, volume~1
	of {\em Abel Symp.}, pages 175--231. Springer, Berlin, 2006.
	
	\bibitem{Kirchberg-Phillips00}
	Eberhard Kirchberg and N.~Christopher Phillips.
	\newblock Embedding of exact {C$^*$}-algebras in the {C}untz algebra {$\mathcal
		O_2$}.
	\newblock {\em J. Reine Angew. Math.}, 525:17--53, 2000.
	
	\bibitem{Kirchberg-Rordam14}
	Eberhard Kirchberg and Mikael R{\o}rdam.
	\newblock Central sequence {C$^*$}-algebras and tensorial absorption of the
	{J}iang--{S}u algebra.
	\newblock {\em J. Reine Angew. Math.}, 695:175--214, 2014.
	
	\bibitem{Kohler10}
	Manuel K{\"o}hler.
	\newblock Universal coefficient theorems in equivariant {$KK$}-theory.
	\newblock Ph.D. Thesis, Georg-August-Universit{\"a}t G{\"o}ttingen, 2010.
	Manuscript available at
	\url{http://hdl.handle.net/11858/00-1735-0000-0006-B6A9-9}.
	
	\bibitem{Kucerovsky-Ng06}
	Dan Kucerovsky and P.~W. Ng.
	\newblock The corona factorization property and approximate unitary
	equivalence.
	\newblock {\em Houston J. Math.}, 32(2):531--550, 2006.
	
	\bibitem{Loring97}
	Terry~A. Loring.
	\newblock {\em Lifting solutions to perturbing problems in {$C^*$}-algebras},
	volume~8 of {\em Fields Institute Monographs}.
	\newblock American Mathematical Society, Providence, RI, 1997.
	
	\bibitem{MatuiSato}
	Hiroki Matui and Yasuhiko Sato.
	\newblock Strict comparison and {$\mathcal Z$}-absorption of nuclear
	{C$^*$}-algebras.
	\newblock {\em Acta Math.}, 209(1):179--196, 2012.
	
	\bibitem{Meyer-Nadareishvili}
	Ralf Meyer and George Nadareishvili.
	\newblock A universal coefficient theorem for actions of finite groups on
	{C$^*$}-algebras.
	\newblock arXiv:2406.11787.
	
	\bibitem{Ortega-Perera-Rordam12}
	Eduard Ortega, Francesc Perera, and Mikael R{\o}rdam.
	\newblock The corona factorization property, stability, and the {C}untz
	semigroup of a {C$^*$}-algebra.
	\newblock {\em Int. Math. Res. Not. IMRN}, (1):34--66, 2012.
	
	\bibitem{Phillips00}
	N.~Christopher Phillips.
	\newblock A classification theorem for nuclear purely infinite simple
	{C$^*$}-algebras.
	\newblock {\em Doc. Math.}, 5:49--114, 2000.
	
	\bibitem{Rordam95}
	Mikael R{\o}rdam.
	\newblock Classification of certain infinite simple {C$^*$}-algebras.
	\newblock {\em J. Funct. Anal.}, 131(2):415--458, 1995.
	
	\bibitem{Rordam04}
	Mikael R{\o}rdam.
	\newblock The stable and the real rank of {$\mathcal Z$}-absorbing
	{C$^*$}-algebras.
	\newblock {\em Internat. J. Math.}, 15(10):1065--1084, 2004.
	
	\bibitem{Rordam-Stormer}
	Mikael R{\o}rdam and Erling St{\o}rmer.
	\newblock {\em Classification of nuclear {$C^*$}-algebras. {E}ntropy in
		operator algebras}, volume 126 of {\em Encyclopaedia of Mathematical
		Sciences}.
	\newblock Springer-Verlag, Berlin, 2002.
	\newblock Operator Algebras and Non-commutative Geometry, 7.
	
	\bibitem{Rosenberg-Schochet86}
	Jonathan Rosenberg and Claude Schochet.
	\newblock The {K}\"{u}nneth theorem and the universal coefficient theorem for
	equivariant {$K$}-theory and {$KK$}-theory.
	\newblock {\em Mem. Amer. Math. Soc.}, 62(348):vi+95, 1986.
	
	\bibitem{Rosenberg-Schochet87}
	Jonathan Rosenberg and Claude Schochet.
	\newblock The {K}\"{u}nneth theorem and the universal coefficient theorem for
	{K}asparov's generalized {$K$}-functor.
	\newblock {\em Duke Math. J.}, 55(2):431--474, 1987.
	
	\bibitem{Schafhauser20a}
	Christopher Schafhauser.
	\newblock A new proof of the {T}ikuisis--{W}hite--{W}inter theorem.
	\newblock {\em J. Reine Angew. Math.}, 759:291--304, 2020.
	
	\bibitem{Schafhauser20b}
	Christopher Schafhauser.
	\newblock Subalgebras of simple {AF}-algebras.
	\newblock {\em Ann. of Math. (2)}, 192(2):309--352, 2020.
	
	\bibitem{Takesaki79}
	Masamichi Takesaki.
	\newblock {\em Theory of operator algebras. {I}}.
	\newblock Springer-Verlag, New York-Heidelberg, 1979.
	
	\bibitem{Thomsen90}
	Klaus Thomsen.
	\newblock Homotopy classes of {$^*$}-homomorphisms between stable
	{C$^*$}-algebras and their multiplier algebras.
	\newblock {\em Duke Math. J.}, 61(1):67--104, 1990.
	
	\bibitem{TWW}
	Aaron Tikuisis, Stuart White, and Wilhelm Winter.
	\newblock Quasidiagonality of nuclear {C$^*$}-algebras.
	\newblock {\em Ann. of Math. (2)}, 185(1):229--284, 2017.
	
	\bibitem{Toms08}
	Andrew~S. Toms.
	\newblock On the classification problem for nuclear {C$^*$}-algebras.
	\newblock {\em Ann. of Math. (2)}, 167(3):1029--1044, 2008.
	
	\bibitem{Toms-Winter07}
	Andrew~S. Toms and Wilhelm Winter.
	\newblock Strongly self-absorbing {C$^*$}-algebras.
	\newblock {\em Trans. Amer. Math. Soc.}, 359(8):3999--4029, 2007.
	
	\bibitem{Winter10}
	Wilhelm Winter.
	\newblock Decomposition rank and {$\mathcal Z$}-stability.
	\newblock {\em Invent. Math.}, 179(2):229--301, 2010.
	
	\bibitem{Winter11}
	Wilhelm Winter.
	\newblock Strongly self-absorbing {C$^*$}-algebras are {$\mathcal Z$}-stable.
	\newblock {\em J. Noncommut. Geom.}, 5(2):253--264, 2011.
	
	\bibitem{Winter12}
	Wilhelm Winter.
	\newblock Nuclear dimension and {$\mathcal Z$}-stability of pure {$\rm
		C^*$}-algebras.
	\newblock {\em Invent. Math.}, 187(2):259--342, 2012.
	
	\bibitem{Winter17}
	Wilhelm Winter.
	\newblock Q{DQ} vs. {UCT}.
	\newblock In {\em Operator algebras and applications---the {A}bel {S}ymposium
		2015}, volume~12 of {\em Abel Symp.}, pages 327--348. Springer, [Cham], 2017.
	
\end{thebibliography}
\end{document}